\def\argmin{\mathop{\rm argmin}}
\newcommand{\err}{{\rm Err}}
\DeclareMathOperator{\ent}{Ent}
\DeclareMathOperator{\Div}{Div}
\DeclareMathOperator{\re}{\textit{re}}
\DeclareMathOperator{\crps}{CRPS}
\DeclareMathOperator{\sgn}{sgn}
\newcommand{\one}{\mathds{1}}
\newtheorem{theorem}{Theorem}
\newtheorem{proposition}{Proposition}
\newtheorem{cor}{Corollary}
\newtheorem{lemma}{Lemma}
\newtheorem{assumption}{Assumption}
\newtheorem{remark}{Remark}
\title{Distributional regression with reject option}
\author{%
  Cl\'ement Dombry \thanks{\texttt{clement.dombry@univ-fcomte.fr}}\\ 
  Université Marie et Louis Pasteur, \\
  CNRS, LmB (UMR 6623),\\ 
  F-25000 Besan\c{c}on, France.\\
  \texttt{clement.dombry@univ-fcomte.fr} \\
  \And
  Ahmed Zaoui \thanks{\texttt{ahmed.zaoui@univ-fcomte.fr}}\\
  Université Marie et Louis Pasteur, \\
  CNRS, LmB (UMR 6623),\\ 
  F-25000 Besan\c{c}on, France.\\
  \texttt{ahmed.zaoui@univ-fcomte.fr} \\ 
}
\begin{document}

\maketitle

\begin{abstract}
Selective prediction, where a model has the option to abstain from making a decision, is crucial for machine learning applications in which mistakes are costly. In this work, we focus on distributional regression and introduce a framework that enables the model to abstain from estimation in situations of high uncertainty. We refer to this approach as \emph{distributional regression with reject option}, inspired by similar concepts in classification and regression with reject option. We study the scenario where the rejection rate is fixed. We derive a closed-form expression for the optimal rule, which relies on thresholding the entropy function of the Continuous Ranked Probability Score (CRPS). We propose a semi-supervised estimation procedure for the optimal rule, using two datasets: the first, labeled, is used to estimate both the conditional distribution function and the entropy function of the CRPS, while the second, unlabeled, is employed to calibrate the desired rejection rate. Notably, the control of the rejection rate is distribution-free. Under mild conditions, we show that our procedure is asymptotically as effective as the optimal rule, both in terms of error rate and rejection rate. Additionally, we establish rates of convergence for our approach based on distributional k-nearest neighbor. A numerical analysis on real-world datasets demonstrates the strong performance of our procedure.\\
{\bf Keywords: }Distributional regression; Reject option; Continuous Ranked Probability Score;  Distributional random forests; Distributional k-nearest neighbors.
\end{abstract}

\section{Introduction} \label{sec:intro}
Prediction is a central task in statistics and machine learning. The most common approach in statistics is regression, which aims to produce a point prediction of the target variable $Y\in \mathbb{R}$ based on covariates $X\in \mathbb{R}^d$. Numerous algorithms exist for point regression~\citep{Gyofri_Kohler_Krzyzak_Walk02, Tsybakov08}, and their theoretical properties are well studied. However, a fundamental limitation of point regression is that it does not provide a measure of prediction uncertainty, which is also crucial in the decision-making process.\\
\\Distributional regression has emerged to address this limitation by modeling the variability of the predicted variable $Y$. The conditional distribution of $Y$ given $X$ is estimated, providing a complete representation of the range of possible outcomes given the information in the covariates. Unlike point regression, which focuses on a single predicted value, distributional regression produces predictions that capture both the general trend and the associated uncertainty. This approach is particularly relevant in contexts where data exhibit high variability or complex behaviors. Distributional regression plays a crucial role in a variety of applied fields, offering valuable insights and predictive power. Notable applications include statistical post-processing of weather forecasts~\citep{Matheson_Winkler76, Gneiting05}, forecasting wind gusts~\citep{Baran2015} and solar irradiance~\citep{Schulz2021}, as well as predicting ICU length of stays during the COVID-19 pandemic (Henzi et al., 2021a). Another recent application of distributional regression is in the fields of medicine and oncology, for the estimation of breast cancer ODX scores with uncertainty quantification~\citep{AlMasry2024}.\\
\\ Distributional regression models, in particular, generally rely on the minimization of a proper scoring rule to align the predictive distribution with the actual observations. Among these scoring rules, the Continuous Ranked Probability Score (CRPS, ~\citealp{Matheson_Winkler76, Gneiting_Raftery07}) is the most widely used. The CRPS evaluates the difference between actual observations and predictive distribution, providing a robust measure to evaluate how well a model captures the true variability and uncertainty inherent in the data. A variety of advanced post-processing techniques have been introduced to improve predictive modeling and uncertainty quantification. Among these, methods such as distributional k-nearest neighbors (KNN, \citealp{ZT89}), Ensemble Model Output Statistics (EMOS, \citealp{Gneiting_Raftery07}), Isotonic Distributional Regression \citep{Henzi_et_al_2021}, Distributional Regression Network (DRN, \citealp{Rasp_Lerch18}) or Distributional Random Forest (DRF, \citealp{Cevid_MichelNBM22}). The theory of distributional regression via scoring rule minimization has recently gained attention. ~\citet{Pic_Dombry_Naveau_Taillardat23} studied the minimax convergence rates with the CRPS error for distribution regression. \citet{Dombry_Zaoui24} provided statistical learning guarantees for model fitting, model selection, and convex aggregation based on CRPS minimization.\\
 \\Despite the development of new algorithms, the richness of information in distributional regression can also present challenges, particularly the presence of outlier predictions or extreme values that may distort the conclusions drawn from the model. To address these issues, integrating  reject option is crucial. This method aims to filter out unreliable predictions, allowing us to focus on the most relevant and robust result. Inspired by the concept of regression with reject option as discussed by~\citet{Zaoui2020}, we extend this framework to distributional regression.\\
\\The study of learning with reject option is a pertinent area of research where the main focus is to abstain from making predictions when there is significant uncertainty about the predicted value; see, for instance,~\citealp{Chow70,Chow57,Herbei_Wegkamp06, denis_hebiri_2020, Zaoui2020, chzhen_al2021} and references therein. This approach helps to reduce prediction errors by offering an alternative: Instead of making a potentially risky prediction, the model can choose to "reject" the decision and not provide a result. The reject option was first explored in~\citet{Chow57} and gained traction within the statistical learning community in the early 2000s, particularly with the development of conformal prediction in~\cite{Vovk02IndepError,Vovk99IntroCP,Vovk_Gammerman_Shafer05}. Three main approaches can be considered, with none having a clear advantage over the others: the first aims to ensure a predefined level of coverage, the second to achieve a predefined rejection rate, and the third to balance the two through a trade-off. These approaches have been rigorously analyzed in the literature, which highlights both the theoretical guarantees and practical effectiveness of each method under various conditions.\\
\\\textbf{Contributions.} In this work, we focus on the reject option in the distributional regression setting, where the rejection rate is predefined (controlled). Our main contributions are the following:
\begin{enumerate}
\item We first introduce the framework of distributional regression with reject option, inspired by regression with reject option~\citep{Zaoui2020}. The optimal predictor subject to a bounded rejection rate is derived and we show that the reject option must be used if the entropy function associated with the CRPS exceeds some explicit threshold. 
\item We propose a plug-in approach to estimate the optimal predictor based on a two-step procedure. The first step estimates the conditional distribution function using a labeled dataset. The second step determines the entropy associated with the CRPS using a second unlabeled dataset and estimates the threshold above which the reject option must be used. The main advantage of our procedure is that it can be applied to any estimator of the conditional distribution.
\item We study the asymptotic properties of this approach under mild assumptions and show in particular consistency,  showing that for large samples our method achieves  the same performance as the optimal predictor, both in terms of error rate and rejection rate.
\item In the particular case of the $k$-nearest neighbour estimator of the conditional distribution, we provide non asymptotic concentration bounds for the excess of risk of the proposed method.
\end{enumerate}
\textbf{Outline of the paper.} In Section~\ref{sec:generalframework}, we formally state the distributional regression with reject option. We provide an optimal rule with reject option and propose a strategy for estimating it. In Section~\ref{sec:statisticalguarantees}, we investigate its statistical properties: its consistency in a general setting on the one hand, and on the other hand, we determine its rate of convergence by applying our approach to distributional $k$-nearest neighbors. Finally, in Section~\ref{sec:numerical}, we present its numerical performance on real datasets.\\
\\\textbf{Notations.} Let $\mathcal{P}(\mathbb{R})$ denote the set of all probability measures on $\mathbb{R}$ and $\mathcal{P}_1(\mathbb{R})$ the subset of probability measures with finite absolute moment, that is the Wasserstein space 
\begin{equation*}\label{eq:def-P1}
\mathcal{P}_1(\mathbb{R})=\Big\{F\in\mathcal{P}(\mathbb{R})\colon \int_{\mathbb{R}}|y|F(\mathrm{d}y)<\infty\Big\}.
\end{equation*}
We denote by $W_1$ the Wasserstein distance of order $1$ on the space $\mathcal{P}_1(\mathbb{R})$ 
given by
\[
W_{1}(F,G)=\int_{\mathbb{R}}\left|F(y)-G(y)\right|dy.
\]
For a monotone increasing function $T$ we denote by $T^{-1}$ it generalized inverse defined for all $p\in (0,1)$ as
\[
T^{-1}(p) =\inf\left\{t\in \mathbb{R} : T(t)\geq p\right\}.
\]
\section{General framework}
\label{sec:generalframework}
This section is devoted to presenting the general framework of our study. After introducing the distributional regression model, we briefly describe the Continuous Ranked Probability Score (CRPS) in Section~\ref{sec:crps}. We then introduce the reject option in distributional regression in a general manner in Section~\ref{sec:rejectoption}, and derive the optimal rule with fixed rejection rate in Section~\ref{sec:fixedrejectionrate}. Finally we propose a general data-driven procedure to the optimal rule with reject option in Section~\ref{sec:data-driven procedure}.\\
\\ Let $(X,Y) \in \mathbb{R}^d\times \mathbb{R}$, where $X$ represents the covariates and $Y$ is the corresponding output. We assume that $(X,Y)\sim \mathrm{P}$, for some unknown distribution $\mathrm{P}$. Let $F^{*}_x(y)=\mathbb{P}(Y\leq y|X=x)$ be the conditional cumulative distribution of $Y$ given $X=x$. The objective of the  distributional regression is to estimate  $F^{*}_x$ as a function of $x$.
To achieve this, we observe a sample $\mathcal{D}_n = \{(X_i, Y_i)\}_{i=1}^n $ consisting of $n$ independent copies of $(X, Y)$. Using this training sample $\mathcal{D}_n$ and a suitable algorithm, we derive a functional estimator $\hat{F}_n : x \mapsto \hat{F}_{n,x}$, which approximates the true mapping $F^* : x \mapsto F^*_x$. The accuracy of this estimator is measured here by its theoretical risk based on the Continuous Ranked Probability Score.
\subsection{The Continuous Ranked Probability Score}
\label{sec:crps}
The Continuous Ranked Probability Score (CRPS, \citealt{Matheson_Winkler76}) is a widely used scoring rule for assessing the quality of probabilistic forecasts. It quantifies the closeness of a forecast distribution $H$ to the observed outcome $y$. Mathematically, it is expressed as:
\[
\crps(H,y) = \int_{\mathbb{R}} \big(H(u) - \mathds{1}_{\{y \leq u\}}\big)^2 \, du,
\]
where $H(u)$ represents the cumulative distribution function of the forecast, and \( \mathds{1}_{\{y \leq u\}} \) is an indicator function that equals $1$ if \( y \leq u \) and 0 otherwise.

The Continuous Ranked Probability Score (CRPS) can be applied to any predictive distribution $H$ with finite absolute moments. A lower CRPS value indicates superior forecast performance, signifying that the predicted distribution more closely aligns with the observed outcomes. Consequently, the CRPS is a negatively oriented score.

The average CRPS can be decomposed into two distinct components: an entropy term and a divergence term (see Lemma~\ref{lem:expectedCrps}). This decomposition is given by:  
\[
\overline{\crps}(H,K) = \mathbb{E}_{Y \sim K} \left[\crps(H, Y)\right] = \ent(K) +  \Div(H,K),
\]
where 
\begin{align*}
\ent(K) = \int_{\mathbb{R}} K(u) \big(1 - K(u)\big) du,\enspace \text{and}\enspace
\Div(H,K)=  \int_{\mathbb{R}} \big(H(u) - K(u)\big)^2 du,
\end{align*}
denote the entropy and divergence functions respectively. The entropy term captures the inherent uncertainty in the distribution $K$, while the divergence term measures the discrepancy between the predictive distribution $H $ and the reference distribution $K$. The CRPS achieves its minimum when $H=K$, confirming that it is a strictly proper scoring rule~\citep{Gneiting05}.

The following formula, widely used in practice, provides a closed form expression for the CRPS when the predictive distribution is the Gaussian distribution $H=\mathcal{N}(m,\sigma^2)$: then
\[
\crps(H,y)=\sigma\left(z(2\Phi(z)-1)+2\phi(z)-1/\sqrt{\pi}\right),
\]
with $ z=( y-\mu)/\sigma$ and $\Phi$ (resp. $\phi$) denoting the cdf (resp. df) of the standard normal distribution. The case $F$ corresponds to discrete predictive distributions, which are represented by a weighted sample $\left(y_i\right)_{1\leq i\leq n}$ with weights $\left(w_i\right)_{1\leq i\leq n}$, meaning the form $H=\sum_{i=1}^n w_i\delta_{y_i}$ (with $\delta_y$ the Dirac mass at $y$), the $\mathrm{CRPS}$ can be computed~\citep{Gneiting_Raftery07} by
\[
\crps(H,y)= \sum_{i=1}^n w_i|y_i-y| -\frac{1}{2}\sum_{1\leq i< j\leq n}w_iw_j|y_i-y_j|.
\]
\subsection{Distributional regression with reject option}
\label{sec:rejectoption}
Let us start by formalizing the problem. Let $F:\mathbb{R}^d \to \mathcal{P}(\mathbb{R})$ be some measurable distribution function. A predictor with reject option $\Gamma_F$ associated to $F$ is  a measurable function from $ \mathbb{R}^d$ to $\mathcal{P}(\mathbb{R})\cup \{\re\}$ such for $x \in \mathbb{R}^d$, $\Gamma_F(x)\in\{F_x,\re\}$. The use of a predictor with reject option offers two possibilities: either $\Gamma_F(x)=F_x$ and we estimate $F_x$; or $\Gamma_F(x)=\re$ and the reject option has been used. This predictor has two important characteristics which are defined as follows: the rejection rate $r(\Gamma_F)=\mathbb{P}(\Gamma_F(X)=\re)$ and the error rate 
\[
\err(\Gamma_F)=\mathbb{E}\left[\crps(F_,Y)\big| \Gamma_F(X)\neq \re\right].
\]
To measure the performance of $\Gamma_F$, we consider a trade-off between these two quantities and we define the risk of $\Gamma_F$ by 
\[
 \mathcal{R}_{\lambda}(\Gamma_F) = \mathbb{E}\left[\crps(F_X,Y)\one_{\{\Gamma_F(X)\neq \re}\}\right]+\lambda r(\Gamma_F),
\]
where $\lambda >0$ is a parameter controlling the trade-off. We are interested in minimizing the risk $ \mathcal{R}_{\lambda}$ over the measurable functions $F$ and all predictors with reject option $\Gamma_F$ that relies on $F$.  The following proposition provides a closed form of the optimal predictor with reject option.
\begin{proposition} For any $\lambda > 0$, the optimal predictor 
    \[
    \Gamma_{\lambda}^{*}\in \argmin_{\Gamma_F,F}\mathcal{R}_{\lambda}(\Gamma).
    \]
is unique (modulo null probability sets) and is defined by
 \[
    \Gamma^{*}_{\lambda}(X)=\begin{cases}
  F^{*}_X  & \text{if} \;\; \ent(F^*_X) \leq\lambda,\\
  \re     & \text{otherwise}.
  \end{cases}
\]
\label{prop:optimalpredictor}
\end{proposition}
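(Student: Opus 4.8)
The plan is to reduce the joint minimization over the distribution function $F$ and the predictor $\Gamma_F$ to a pointwise optimization in $x$, by conditioning on the covariate $X$ and invoking the CRPS decomposition of Lemma~\ref{lem:expectedCrps}. First I would encode any predictor with reject option through its acceptance region $A=\{x\in\mathbb{R}^d:\Gamma_F(x)\neq\re\}$, so that $r(\Gamma_F)=\mathbb{P}(X\notin A)$ and
\[
\mathcal{R}_{\lambda}(\Gamma_F)=\mathbb{E}\left[\crps(F_X,Y)\one_{\{X\in A\}}\right]+\lambda\,\mathbb{E}\left[\one_{\{X\notin A\}}\right].
\]
Conditioning the first term on $X$ and using that $Y\mid X=x\sim F^{*}_x$ together with $\mathbb{E}_{Y\sim F^{*}_x}[\crps(F_x,Y)]=\ent(F^{*}_x)+\Div(F_x,F^{*}_x)$ yields
\[
\mathcal{R}_{\lambda}(\Gamma_F)=\mathbb{E}\left[\one_{\{X\in A\}}\big(\ent(F^{*}_X)+\Div(F_X,F^{*}_X)\big)\right]+\lambda\,\mathbb{E}\left[\one_{\{X\notin A\}}\right].
\]

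Second, I would optimize over $F$ for a fixed acceptance region. Since $\Div(F_X,F^{*}_X)\geq 0$ with equality if and only if $F_X=F^{*}_X$ (strict properness of the CRPS), and this term enters the risk only through the nonnegative weight $\one_{\{X\in A\}}$, the choice $F=F^{*}$ is optimal, the value of $F$ on the reject set being immaterial. Plugging $F=F^{*}$ and rewriting $\lambda\one_{\{X\notin A\}}=\lambda(1-\one_{\{X\in A\}})$ leaves
\[
\mathcal{R}_{\lambda}=\mathbb{E}\left[\lambda+\one_{\{X\in A\}}\big(\ent(F^{*}_X)-\lambda\big)\right],
\]
which is now minimized pointwise over $A$: the integrand is smallest when $\one_{\{X\in A\}}=1$ on $\{\ent(F^{*}_X)<\lambda\}$ and $\one_{\{X\in A\}}=0$ on $\{\ent(F^{*}_X)>\lambda\}$. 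This is exactly the thresholding rule $\Gamma^{*}_{\lambda}$, which accepts and returns $F^{*}_X$ when $\ent(F^{*}_X)\leq\lambda$ and rejects otherwise.

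Finally, for uniqueness modulo null sets, I would argue that on the acceptance region the strict positivity of the divergence forces $F_X=F^{*}_X$ almost everywhere, while outside it $F$ is irrelevant; and the acceptance region itself is pinned down except on the tie set $\{\ent(F^{*}_X)=\lambda\}$, where accepting or rejecting yields the same contribution to the risk. The main obstacle is precisely this boundary set: establishing genuine uniqueness requires showing that $\{x:\ent(F^{*}_x)=\lambda\}$ is $\mathbb{P}_X$-null, or else adopting the stated convention of accepting on it, which is where some care (and possibly a mild continuity assumption on the law of $\ent(F^{*}_X)$) is needed. The remaining points to verify are routine: measurability of $x\mapsto\ent(F^{*}_x)$ and of the induced region $A$, and the integrability guaranteeing that all expectations above are well defined.
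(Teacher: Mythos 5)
Your argument is correct and follows essentially the same route as the paper: both reduce the risk to $\mathbb{E}\left[\left(\Div(F_X,F^{*}_{X})+\ent(F^{*}_{X})-\lambda\right)\one_{\{\Gamma_F(X)\neq \re\}}\right]+\lambda$ (the paper expands the square and kills the cross term by conditioning on $X$, which is exactly the conditional form of Lemma~\ref{lem:expectedCrps} that you invoke directly) and then optimize first over $F$ and then pointwise over the acceptance region. Your closing remark that uniqueness genuinely requires the tie set $\{\ent(F^{*}_X)=\lambda\}$ to be $\mathbb{P}_X$-null is a point the paper's proof passes over silently, and is worth keeping.
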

We can remark that the entropy $\ent(F^*_X)$ plays a central role when deciding whether to use the reject option or not in distributional regression. Whether to make an estimation or not depends on the thresholding of $\ent(F^*_X)$. The following proposition gives the properties of $\Gamma^{*}_{\lambda}$ in terms of error and rejection rate.
\begin{proposition}
For any $0<\lambda \leq \lambda'$, the following holds
\[
r(\Gamma_{\lambda'}^{*})\leq r(\Gamma_{\lambda}^{*}), \enspace \text{and}\enspace
\err(\Gamma_{\lambda}^{*})\leq \err(\Gamma_{\lambda'}^{*}).
\]
\label{prop:propertiesoptimalpredictor}
\end{proposition}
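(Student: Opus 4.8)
The plan is to reduce both inequalities to elementary monotonicity statements about the scalar random variable $Z := \ent(F^*_X)$, after first obtaining explicit expressions for $r(\Gamma_\lambda^*)$ and $\err(\Gamma_\lambda^*)$ in terms of $Z$.

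First I would record the rejection-rate formula. By the closed form of the optimal rule in Proposition~\ref{prop:optimalpredictor}, the event $\{\Gamma_\lambda^*(X) = \re\}$ coincides with $\{Z > \lambda\}$, so that $r(\Gamma_\lambda^*) = \mathbb{P}(Z > \lambda)$. Since $\lambda \leq \lambda'$ gives the inclusion $\{Z > \lambda'\} \subseteq \{Z > \lambda\}$, monotonicity of probability immediately yields $r(\Gamma_{\lambda'}^*) \leq r(\Gamma_\lambda^*)$.

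Next I would simplify the error rate. Conditioning on $X$ and applying the CRPS decomposition of Lemma~\ref{lem:expectedCrps} with $H = K = F^*_X$, the divergence term vanishes and
\[
\mathbb{E}\left[\crps(F^*_X, Y) \,\middle|\, X\right] = \ent(F^*_X) = Z.
\]
Combining this with the tower property and the fact that $\{\Gamma_\lambda^*(X) \neq \re\} = \{Z \leq \lambda\}$ is $\sigma(X)$-measurable, the error rate collapses to the truncated conditional mean
\[
\err(\Gamma_\lambda^*) = \mathbb{E}\left[Z \,\middle|\, Z \leq \lambda\right],
\]
which is well defined as soon as $r(\Gamma_\lambda^*) < 1$.

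It then remains to show $\mathbb{E}[Z \mid Z \leq \lambda] \leq \mathbb{E}[Z \mid Z \leq \lambda']$. I would split the larger conditioning event as a disjoint union $\{Z \leq \lambda'\} = \{Z \leq \lambda\} \sqcup \{\lambda < Z \leq \lambda'\}$ and write $\mathbb{E}[Z \mid Z \leq \lambda']$ as the convex combination of the two conditional means $\mathbb{E}[Z \mid Z \leq \lambda]$ and $\mathbb{E}[Z \mid \lambda < Z \leq \lambda']$, weighted by the respective conditional probabilities. Since the first mean is at most $\lambda$ while the second is strictly larger than $\lambda$, the convex combination dominates the first mean, which gives the claimed inequality $\err(\Gamma_\lambda^*) \leq \err(\Gamma_{\lambda'}^*)$. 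The main thing to watch is the degenerate case where the slab $\{\lambda < Z \leq \lambda'\}$ has probability zero, in which case the two conditioning events agree up to a null set and equality holds; otherwise the weighted-average argument applies verbatim. The only genuine subtlety in the whole argument is the reduction of $\err$ to $\mathbb{E}[Z \mid Z \leq \lambda]$ through the decomposition lemma, after which both monotonicity statements are essentially immediate.
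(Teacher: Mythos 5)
Your proof is correct and takes essentially the same route as the paper's: both arguments reduce the error rate to the truncated mean of $Z=\ent(F^*_X)$ via the identity $\mathbb{E}[\crps(F^*_X,Y)\mid X]=\ent(F^*_X)$, and then compare $\mathbb{E}[Z\mid Z\le\lambda]$ with $\mathbb{E}[Z\mid Z\le\lambda']$ by exploiting that $Z\le\lambda$ on the acceptance event and $Z\ge\lambda$ on the slab $\{\lambda<Z\le\lambda'\}$. Your convex-combination phrasing is just a cleaner packaging of the paper's term-by-term bound, and your handling of the degenerate case where the slab is null is a welcome extra precision.
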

The parameter $\lambda$ can be interpreted as a cost for applying the reject option and Proposition~\ref{prop:propertiesoptimalpredictor} shows that  a larger value of $\lambda$ leads to a lower rejection rate and a higher error. We illustrate this in Figure~\ref{fig:ErrorRejet}, see Section~\ref{sec:drawbackogchoosinglambda} in the supplementary material. The choice of $\lambda$ in practice is not easy and we suggest instead to constrain the rejection rate and to look for the optimal predictor with fixed rejection rate. 

\subsection{Fixed rejection rate}
\label{sec:fixedrejectionrate}
Following the established strategy of regression and classification with reject option ~\citep{Zaoui2020,chzhen_al2021, denis_hebiri_2020}), we predefine the rejection rate. Specifically, we restrict the focus on predictors with  rejection rate bounded by $\varepsilon \in (0,1)$ and aim at minimizing the error. We introduce the following constrained optimization problem
\[
\Gamma_{\varepsilon}^{*}\in \argmin \{\err(\Gamma_F)\enspace : \enspace \text{such that}\enspace r(\Gamma_F)\leq \varepsilon \}.
\]
We make the following assumption.
\begin{assumption}
The random variable $\ent(F^{*}_X)$ has a continuous distribution.
\label{ass:nonatomic}
\end{assumption}
Assumption~\ref{ass:nonatomic} implies that the cumulative distribution function $G_{\ent}$ of $\ent(F_X^*)$ is continuous. It is required to derive a closed formula for $\varepsilon$-predictor $\Gamma^{*}_{\varepsilon}$.
\begin{proposition}
\label{prop:optimalrule}
Let $\varepsilon \in (0,1)$, and $\lambda_\varepsilon=G^{-1}_{\ent}(1-\varepsilon)$. Under Assumption~\ref{ass:nonatomic}, we have $\Gamma_{\varepsilon}^*=\Gamma_{\lambda_\varepsilon}^*$ and $r(\Gamma_\varepsilon^*)=\varepsilon$.
\end{proposition}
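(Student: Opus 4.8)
The plan is to recast the constrained problem as a region-selection problem and then run a Neyman--Pearson style comparison against the unconstrained optimum $\Gamma^*_{\lambda_\varepsilon}$ that is already characterized in Proposition~\ref{prop:optimalpredictor}. Throughout I would abbreviate $h(X)=\ent(F^*_X)$ and, for a generic predictor $\Gamma_F$ with acceptance region $A=\{x:\Gamma_F(x)\neq\re\}$, use that $r(\Gamma_F)=1-\mathbb{P}(X\in A)$ and $\err(\Gamma_F)=\mathbb{E}[\crps(F_X,Y)\one_A]/\mathbb{P}(X\in A)$.

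First I would settle the rejection rate. By Proposition~\ref{prop:optimalpredictor} the rule $\Gamma^*_{\lambda_\varepsilon}$ rejects exactly on $\{h(X)>\lambda_\varepsilon\}$, so $r(\Gamma^*_{\lambda_\varepsilon})=1-G_{\ent}(\lambda_\varepsilon)$. Under Assumption~\ref{ass:nonatomic} the cdf $G_{\ent}$ is continuous, hence $G_{\ent}(G^{-1}_{\ent}(1-\varepsilon))=1-\varepsilon$ and therefore $r(\Gamma^*_{\lambda_\varepsilon})=\varepsilon$; in particular $\Gamma^*_{\lambda_\varepsilon}$ is feasible for the constraint $r(\Gamma_F)\le\varepsilon$. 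Next comes the reduction to region selection: for any fixed $A$, the decomposition of Lemma~\ref{lem:expectedCrps} gives $\mathbb{E}[\crps(F_X,Y)\one_A]=\mathbb{E}[(\ent(F^*_X)+\Div(F_X,F^*_X))\one_A]\ge\mathbb{E}[h(X)\one_A]$, with equality precisely when $F=F^*$ a.e.\ on $A$. Thus minimizing $\err$ over all pairs $(F,\Gamma_F)$ with $r(\Gamma_F)\le\varepsilon$ reduces to minimizing $\mathbb{E}[h(X)\one_A]/\mathbb{P}(X\in A)$ over regions with $\mathbb{P}(X\in A)\ge 1-\varepsilon$, and the candidate minimizer is the acceptance region $A^*=\{h\le\lambda_\varepsilon\}$ of $\Gamma^*_{\lambda_\varepsilon}$.

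For the core comparison I would set $c^*=\err(\Gamma^*_{\lambda_\varepsilon})=\mathbb{E}[h\one_{A^*}]/(1-\varepsilon)$ and rely on two facts. The first is that, since $h\le\lambda_\varepsilon$ on $A^*$, averaging yields $c^*\le\lambda_\varepsilon$. The second is the optimality of $\Gamma^*_{\lambda_\varepsilon}$ for the unconstrained risk $\mathcal{R}_{\lambda_\varepsilon}$: applied to the predictor with region $A$ and $F=F^*$ it gives $\mathbb{E}[h\one_{A^*}]+\lambda_\varepsilon\varepsilon\le\mathbb{E}[h\one_A]+\lambda_\varepsilon r$, where $r=r(\Gamma_F)$, i.e.\ $\mathbb{E}[h\one_A]\ge c^*(1-\varepsilon)+\lambda_\varepsilon(\varepsilon-r)$. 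Dividing by $\mathbb{P}(X\in A)=1-r>0$, the target inequality $\err(\Gamma_F)\ge c^*$ reduces to $(\lambda_\varepsilon-c^*)(\varepsilon-r)\ge 0$, which holds because $\lambda_\varepsilon\ge c^*$ and $r\le\varepsilon$ by feasibility. Hence $c^*$ is a lower bound for $\err$ over all feasible predictors, it is attained by $\Gamma^*_{\lambda_\varepsilon}$, and so $\Gamma^*_\varepsilon=\Gamma^*_{\lambda_\varepsilon}$.

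The main obstacle is that $\err$ is a \emph{ratio} (a conditional expectation normalized by $1-r$), which defeats a naive Lagrangian relaxation: enlarging the acceptance region both raises the numerator and the denominator. The resolution is exactly the observation $\lambda_\varepsilon\ge c^*$, which, combined with $r\le\varepsilon$, produces the sign condition $(\lambda_\varepsilon-c^*)(\varepsilon-r)\ge 0$ that closes the argument. For the uniqueness implicit in the identity $\Gamma^*_\varepsilon=\Gamma^*_{\lambda_\varepsilon}$, I would track the equality cases: strict properness of the CRPS forces $F=F^*$ on the accepted region, and Assumption~\ref{ass:nonatomic} ensures $\mathbb{P}(h=\lambda_\varepsilon)=0$, so the optimal region coincides with $A^*$ up to a null set, matching the uniqueness already recorded in Proposition~\ref{prop:optimalpredictor}.
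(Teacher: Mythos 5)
Your proof is correct, but it takes a genuinely different route from the paper's. The paper handles the ratio problem by introducing an \emph{intermediate} optimal predictor matched to the competitor's rejection rate: for a feasible $\Gamma_F$ with $r(\Gamma_F)=\varepsilon'\leq\varepsilon$, it compares $\Gamma_F$ with $\Gamma^*_{\lambda_{\varepsilon'}}$, observing that when two predictors share the same rejection rate the difference of their error rates equals $\tfrac{1}{1-\varepsilon'}\bigl(\mathcal{R}_{\lambda_{\varepsilon'}}(\Gamma_F)-\mathcal{R}_{\lambda_{\varepsilon'}}(\Gamma^*_{\lambda_{\varepsilon'}})\bigr)\geq 0$ by Proposition~\ref{prop:optimalpredictor}, and then chains this with the monotonicity $\err(\Gamma^*_{\lambda_{\varepsilon}})\leq\err(\Gamma^*_{\lambda_{\varepsilon'}})$ from Proposition~\ref{prop:propertiesoptimalpredictor}. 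You instead compare every feasible competitor directly against $\Gamma^*_{\lambda_\varepsilon}$ and absorb the mismatch in denominators through the sign condition $(\lambda_\varepsilon-c^*)(\varepsilon-r)\geq 0$, where $c^*=\mathbb{E}[h\one_{A^*}]/(1-\varepsilon)\leq\lambda_\varepsilon$ because $h\leq\lambda_\varepsilon$ on $A^*$. What your route buys is independence from Proposition~\ref{prop:propertiesoptimalpredictor}, whose proof is the more delicate computation in the paper; what the paper's route buys is that the comparison at matched rejection rates is a one-line identity requiring no auxiliary inequality on $c^*$. Both arguments use the same two ingredients at bottom (the entropy--divergence decomposition of the conditional CRPS and the unconstrained optimality of $\Gamma^*_\lambda$), and your treatment of the equality cases (strict properness forcing $F=F^*$ on the acceptance region, and $\mathbb{P}(h=\lambda_\varepsilon)=0$ under Assumption~\ref{ass:nonatomic}) correctly accounts for the uniqueness implicit in the identity $\Gamma^*_\varepsilon=\Gamma^*_{\lambda_\varepsilon}$.
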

We can use the same idea from the proof of Proposition~2 in~\citet{Zaoui2020} to demonstrate this proposition, see Supplementary material . Proposition~\ref{prop:optimalrule} defines the optimal predictor by choosing the best value for the parameter $ \lambda_{\varepsilon}$, which corresponds to the value of the generalized inverse function of the cumulative distribution $G^{-1}_{\ent}$  at $1-\varepsilon$. The advantage of this approach is that it ensures the predictor has exactly a rejection rate equal to $\varepsilon$, under Assumption~\ref{ass:nonatomic}. This precise control of the rejection rate makes the predictor both reliable and effective in the given context. In particular, the properties of the generalized inverse function~\citep{Embrechts_Hofert13} guarantee that, for $\varepsilon_2 \leq \varepsilon_1$, 
$\lambda_{\varepsilon_1} \leq \lambda_{\varepsilon_2}$, and thus 
\[
\err(\Gamma_{\varepsilon_1}^*) \leq \err(\Gamma_{\varepsilon_2}^*).
\]
In simple terms, as $\varepsilon$ increases, the error rate of $\Gamma_{\varepsilon}^*$ decreases. This shows that the method effectively reduces prediction errors as the rejection rate goes up, highlighting its usefulness and reliability in real-world applications. The performance of a predictor with reject option $\Gamma_F$ can be quantified by its excess risk
\[
\mathcal{E}_{\lambda_\varepsilon}(\Gamma_F):= \mathcal{R}_{\lambda_\varepsilon}(\Gamma_F)-\mathcal{R}_{\lambda_\varepsilon}(\Gamma_{\varepsilon}^*).
\]
The excess risk serves as a central focus of our analysis.It admits the following explicit formulation.
\begin{proposition}
Let $\varepsilon \in (0,1)$. For any predictor $\Gamma_F$, we have
\[
\mathcal{E}_{\lambda_\varepsilon}(\Gamma_F)= \mathbb{E}_X\left[\Div(F_X,F^{*}_{X})\one_{\{\Gamma_F(X)\neq \re\}}\right]
+\mathbb{E}_X\left[\big|\ent(F_{X}^{*})-\lambda_{\varepsilon}\big|\one_{\{\Gamma_F(X)\neq \Gamma_{\varepsilon}^{*}(X)\}}\right].
\]
\label{prop:excessrisk}
\end{proposition}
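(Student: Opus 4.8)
The plan is to evaluate the two risks $\mathcal{R}_{\lambda_\varepsilon}(\Gamma_F)$ and $\mathcal{R}_{\lambda_\varepsilon}(\Gamma_\varepsilon^*)$ separately by conditioning on $X$, then subtract and regroup. First I would condition the CRPS term on $X$: since $\{\Gamma_F(X)\neq\re\}$ is $\sigma(X)$-measurable and $Y\mid X=x\sim F^*_x$, the tower property combined with the decomposition of Lemma~\ref{lem:expectedCrps}, namely $\mathbb{E}[\crps(F_x,Y)\mid X=x]=\overline{\crps}(F_x,F^*_x)=\ent(F^*_x)+\Div(F_x,F^*_x)$, together with $r(\Gamma_F)=\mathbb{E}_X[\one_{\{\Gamma_F(X)=\re\}}]$, yields the pointwise representation
\[
\mathcal{R}_{\lambda_\varepsilon}(\Gamma_F)=\mathbb{E}_X\Big[\one_{\{\Gamma_F(X)\neq\re\}}\big(\ent(F^*_X)+\Div(F_X,F^*_X)\big)+\lambda_\varepsilon\,\one_{\{\Gamma_F(X)=\re\}}\Big].
\]
Applying the same identity to $\Gamma_\varepsilon^*$, for which $F=F^*$ forces $\Div(F^*_X,F^*_X)=0$ and for which the acceptance region is exactly $\{\ent(F^*_X)\leq\lambda_\varepsilon\}$ by Proposition~\ref{prop:optimalrule}, gives the analogous expression with only the entropy and threshold contributions surviving.

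Writing $a=\one_{\{\Gamma_F(X)\neq\re\}}$ and $a^*=\one_{\{\Gamma_\varepsilon^*(X)\neq\re\}}=\one_{\{\ent(F^*_X)\leq\lambda_\varepsilon\}}$, I would subtract the two representations. The divergence contribution appears only through $\Gamma_F$, while the entropy and threshold terms rearrange so that the pointwise excess integrand collapses to
\[
a\,\Div(F_X,F^*_X)+(a-a^*)\big(\ent(F^*_X)-\lambda_\varepsilon\big).
\]
Taking expectations, the first summand is exactly the announced divergence term $\mathbb{E}_X[\Div(F_X,F^*_X)\one_{\{\Gamma_F(X)\neq\re\}}]$. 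The heart of the argument is then the elementary identity $(a-a^*)(\ent(F^*_X)-\lambda_\varepsilon)=|\ent(F^*_X)-\lambda_\varepsilon|\,\one_{\{a\neq a^*\}}$, which I would reduce to the second announced term.

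I expect this absolute-value identity to be the only real obstacle, and I would settle it by a three-case check that hinges on the thresholding structure of $\Gamma_\varepsilon^*$. When $a=a^*$ both sides vanish. When $a=1,a^*=0$, the definition $a^*=\one_{\{\ent(F^*_X)\leq\lambda_\varepsilon\}}$ forces $\ent(F^*_X)>\lambda_\varepsilon$, so the product equals $\ent(F^*_X)-\lambda_\varepsilon=|\ent(F^*_X)-\lambda_\varepsilon|$; symmetrically, when $a=0,a^*=1$ one has $\ent(F^*_X)\leq\lambda_\varepsilon$ and the product again equals $|\ent(F^*_X)-\lambda_\varepsilon|$. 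The crucial mechanism — and the only place where the form of $\Gamma_\varepsilon^*$ is used — is that the sign of $\ent(F^*_X)-\lambda_\varepsilon$ is pinned to the value of $a^*$ through the threshold rule, which is precisely what renders the product nonnegative and equal to the absolute value. It then remains to identify $\{a\neq a^*\}$, the event that exactly one of $\Gamma_F$ and $\Gamma_\varepsilon^*$ abstains, with $\{\Gamma_F(X)\neq\Gamma_\varepsilon^*(X)\}$: on this set one predictor returns $\re$ while the other returns a genuine distribution, so they indeed differ. Combining the two contributions gives the stated formula, the remaining steps being routine measurability bookkeeping and rearrangement.
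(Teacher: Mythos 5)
Your proof is correct and follows essentially the same route as the paper: both reduce to the representation $\mathcal{R}_{\lambda_\varepsilon}(\Gamma_F)=\mathbb{E}\left[\left(\Div(F_X,F^{*}_{X})+\ent(F^{*}_{X})-\lambda_\varepsilon\right)\one_{\{\Gamma_F(X)\neq \re\}}\right]+\lambda_\varepsilon$ (you via Lemma~\ref{lem:expectedCrps}, the paper via the cross-term expansion already carried out in the proof of Proposition~\ref{prop:optimalpredictor}) and then convert $(\one_{\{\Gamma_F(X)\neq \re\}}-\one_{\{\Gamma_{\varepsilon}^{*}(X)\neq \re\}})(\ent(F^{*}_{X})-\lambda_{\varepsilon})$ into the absolute-value form using the threshold structure of $\Gamma_{\varepsilon}^{*}$. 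Your three-case check of that sign identity is just the paper's $\sgn$ argument made explicit, and your reading of $\{\Gamma_F(X)\neq \Gamma_{\varepsilon}^{*}(X)\}$ as the event where the accept/reject decisions disagree is the same convention the paper implicitly adopts.
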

\vspace*{-0.5cm}
The excess risk consists of two components. The first component depends on the second Cramér's distance of $F$ when the predictor $\Gamma_F$ decides to estimate. The second component is directly linked to the behavior of the entropy function $\ent(F^{*}_X)$ around the threshold $\lambda_\varepsilon$.
\subsection{Data-driven procedure}
\label{sec:data-driven procedure}
The optimal predictor $\Gamma_\varepsilon^*$ depends on the unknown function, the conditional distribution $F^*_X$, the entropy $\ent(F^{*}_X)$ and its distribution function. Its estimation naturally relies on a semi-supervised plug-in approach. \\
\\First, we introduce two independent samples, a labeled sample $\mathcal{D}_n=\{(X_i,Y_i)\}_{i=1}^n$ and an unlabeled sample $\mathcal{D}_N=\{X_i\}_{i=1+n}^{n+N}$. Based on the first dataset $\mathcal{D}_n$, we build  an estimator $\hat{F}_{n,X}$ of $F^*_X$. According to the definition of entropy $\ent(F^*_X)$ can also be estimated using the plug-in rule and  its estimator is given as follows: \[
\widehat{\ent}(F^{*}_X)=\ent(\hat{F}_{n,X}).
\]
To randomize $\widehat{\ent}$ and ensure its continuity, we introduce a random perturbation $\zeta$ distributed according to a Uniform distribution on $[0,u]$, for $u>0$ and independent from all other variables. This defines
\[
\widehat{\ent}_{\zeta}(F^{*}_X):=\ent(\hat{F}_{n,X})+\zeta.
\] 
Next, based on the second dataset $\mathcal{D}_N$, we calculate the empirical cumulative distribution of  $\widehat{\ent}_\zeta$
\[
\hat{G}_{\widehat{\ent}_{\zeta}}(\cdot)=\frac{1}{N}\sum_{i=1+n}^{n+N}\one_{\{\widehat{\ent}_{\zeta_i}(F^{*}_{X_i})
\leq \cdot\}}.
\]
Finally, the plug-in $\varepsilon$-predictor is given as 
\[
   \hat{\Gamma}_{\varepsilon}(X)=\begin{cases}
  \hat{F}_{n,X}  & \text{if} \;\; \hat{G}_{\widehat{\ent}_{\zeta}}(\widehat{\ent}_{\zeta}(F^{*}_X) )\leq 1-\varepsilon,\\
  \re     & \text{otherwise} \enspace .
  \end{cases}
\]
It is important to highlight that the proposed methodology is flexible enough to work with any type of estimator for the conditional distribution function.
\begin{remark}
The randomization step does not affect the quality of predictions. Its role is to guarantee the continuity of $\widehat{\ent}_{\zeta}$, ensuring that the desired rejection rate is achieved. Furthermore, the consistency of $\widehat{\ent}$ ensures the consistency of $\widehat{\ent}_{\zeta}$ as $u$ tends to zero.
\end{remark}
\section{Statistical guarantees}
\label{sec:statisticalguarantees}
In this part, we provide some theoretical results justifying our approach.  In Section~\ref{sec:consitency}, we establish the consistency of the method and, subsequently in Section~\ref{sec:rates}, we derive its convergence rate when the underlying estimation of the conditional distribution is based on $k$-nearest neighbours.\\
\\ Before we proceed, we first justify that our method reaches the expected rejection rate.
\begin{proposition}
\label{prop:distributionfree}
Let $\varepsilon \in (0,1)$. Then the rejection rate of $\hat\Gamma_\varepsilon$ satisfies
    \[
    \mathbb{E}\left[|r(\hat{\Gamma}_{\varepsilon})-\varepsilon|\right]\leq C N^{-1/2},
    \]
    where $C>0$ is an absolute constant.
\end{proposition}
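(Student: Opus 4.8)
The plan is to condition on the labeled sample $\mathcal{D}_n$ and exploit the fact that, thanks to the uniform randomization $\zeta$, the conditional law of the perturbed entropy is continuous. This reduces the whole problem to the fluctuations of a single order statistic of i.i.d.\ uniform random variables, and it is exactly the mechanism that makes the control of the rejection rate distribution-free.

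First I would fix $\hat F_n$ (equivalently, condition on $\mathcal{D}_n$) and let $G$ denote the conditional cumulative distribution function of $\widehat{\ent}_\zeta(F^*_X)=\ent(\hat F_{n,X})+\zeta$ for a fresh pair $(X,\zeta)$. Writing $G(t)=\mathbb{E}_X\big[\tfrac1u\,\mathrm{Leb}(\{s\in[0,u]:\ent(\hat F_{n,X})+s\le t\})\big]$ shows that $G$ is $1/u$-Lipschitz, hence continuous; this is precisely where the randomization step is used. Conditionally on $\mathcal{D}_n$, the $N$ values $W_i=\widehat{\ent}_{\zeta_i}(F^*_{X_i})$, $i=n+1,\dots,n+N$, are i.i.d.\ with continuous c.d.f.\ $G$, and $\hat G_{\widehat{\ent}_\zeta}$ is their empirical c.d.f.

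Next I would rewrite the rejection event. For a fresh test pair set $Z=\widehat{\ent}_\zeta(F^*_X)$; rejection occurs iff $\hat G_{\widehat{\ent}_\zeta}(Z)>1-\varepsilon$, i.e.\ iff $\#\{i:W_i\le Z\}>N(1-\varepsilon)$, equivalently $Z\ge W_{(k_0)}$ with $k_0=\lfloor N(1-\varepsilon)\rfloor+1$ and $W_{(1)}<\dots<W_{(N)}$ the (a.s.\ distinct) order statistics. Since $Z$ is independent of the $W_i$ and has continuous c.d.f.\ $G$, integrating over $(X,\zeta)$ gives $r(\hat\Gamma_\varepsilon)=1-G\big(W_{(k_0)}\big)$. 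I then invoke the probability integral transform: the variables $U_i:=G(W_i)$ are i.i.d.\ $\mathrm{Unif}[0,1]$, and monotonicity of $G$ yields $G(W_{(k_0)})=U_{(k_0)}$, so that $U_{(k_0)}\sim\mathrm{Beta}(k_0,N-k_0+1)$ and $r(\hat\Gamma_\varepsilon)=1-U_{(k_0)}$.

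Finally I would bound $\mathbb{E}\big[|r(\hat\Gamma_\varepsilon)-\varepsilon|\big]=\mathbb{E}\big[|U_{(k_0)}-(1-\varepsilon)|\big]$ by splitting into a bias and a fluctuation term. Using $\mathbb{E}[U_{(k_0)}]=k_0/(N+1)$ together with $|k_0-(N+1)(1-\varepsilon)|\le 1$ controls the bias by $1/(N+1)$, while $\mathrm{Var}(U_{(k_0)})=\frac{k_0(N+1-k_0)}{(N+1)^2(N+2)}\le\frac{1}{4N}$ controls the fluctuation by $\tfrac12 N^{-1/2}$ through Cauchy--Schwarz, giving $\mathbb{E}[|r(\hat\Gamma_\varepsilon)-\varepsilon|]\le C N^{-1/2}$ with an absolute constant $C$ (e.g.\ $C=3/2$). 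Crucially this estimate depends on $G$ only through its continuity, hence holds uniformly in $\mathcal{D}_n$, and the conditioning is removed by taking expectation over $\mathcal{D}_n$. The only genuinely delicate point is establishing the continuity of $G$ and keeping the conditioning bookkeeping clean; once the probability integral transform turns $r(\hat\Gamma_\varepsilon)$ into $1-U_{(k_0)}$, the remaining estimates on uniform order statistics are routine.
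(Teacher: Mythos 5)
Your proof is correct, but it takes a genuinely different route from the paper. The paper introduces a randomized pseudo-oracle $\tilde{\Gamma}_{\varepsilon}$ that thresholds $\widehat{\ent}_{\zeta}(F^{*}_{X})$ at the true conditional quantile $\tilde{\lambda}_\varepsilon=G_{\widehat{\ent}_{\zeta}}^{-1}(1-\varepsilon)$, notes that $r(\tilde{\Gamma}_{\varepsilon})=\varepsilon$ exactly (again via the continuity supplied by the randomization $\zeta$), and then bounds the expected symmetric difference $I_{\varepsilon}=\mathbb{E}\big[|\one_{\{\hat{\Gamma}_{\varepsilon}(X)\neq \re\}}-\one_{\{\tilde{\Gamma}_{\varepsilon}(X)\neq \re\}}|\big]$ by combining the Dvoretzky--Kiefer--Wolfowitz inequality with a margin-plus-concentration lemma borrowed from the classification-with-reject-option literature; the rejection-rate bound follows since $|r(\hat{\Gamma}_{\varepsilon})-\varepsilon|\leq I_{\varepsilon}$. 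You instead observe that rejection of a fresh point is equivalent to its perturbed entropy exceeding the order statistic $W_{(k_0)}$ of the calibration values, apply the probability integral transform, and read off the exact $\mathrm{Beta}(k_0,N-k_0+1)$ law of $U_{(k_0)}$. Both arguments hinge on the same key point --- the randomization makes the conditional law of $\widehat{\ent}_{\zeta}(F^{*}_{X})$ continuous, so the analysis is distribution-free uniformly in $\mathcal{D}_n$ --- and your bookkeeping ($k_0\leq N$ since $\varepsilon>0$, the bias bound $|k_0-(N+1)(1-\varepsilon)|\leq 1$, the variance bound $1/(4N)$) checks out. Your approach is more elementary and yields an explicit constant ($C=3/2$), whereas the paper's approach, though less sharp here, proves the stronger statement on $I_{\varepsilon}$ (control of the symmetric difference of the rejection regions, not merely of their probabilities), which is reused in Step~1 of the proof of Theorem~\ref{thm:consistency}; your order-statistics argument does not by itself deliver that stronger control.
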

This states that the rejection rate is well-controlled up to a term of order $N^{-1/2}$, depending on the size of the second unlabeled sample. We highlight that the result is distribution-free, i.e. it does not depend on the distribution of $(X,Y)$. This aligns with the same rate of convergence observed within the framework of both regression and classification with reject option~\citep{denis_hebiri_2020, Zaoui2020}.
\subsection{Consistency result}
\label{sec:consitency}
 In this part, we establish the consistency of the plug-in $\varepsilon$-estimator based on the estimator $\hat{F}_X$ and $\widehat{\ent}$.  It is worth noting that $\widehat{\ent}$ itself is derived from the estimator $\hat{F}_X$. We describe a general bound of the excess risk of $\hat{\Gamma}_{\varepsilon}$. We make the following assumption.
 \begin{assumption}
 There exists a constant $M\geq 0$ such that 
 \[
 \ent(F^{*}_x) \leq M  \enspace \textit{for all} \enspace x \in \mathbb{R}^d.
 \]
 \label{ass:boundedentropy}
 \end{assumption}
\begin{theorem}
\label{thm:consistency}
Let $\varepsilon\in (0,1)$. Under Assumption~\ref{ass:nonatomic}-~\ref{ass:boundedentropy}, it holds that 
\[
\mathbb{E}\left[\mathcal{E}_{\lambda_\varepsilon}(\hat{\Gamma}_{\varepsilon})\right]
\leq 2\ \mathbb{E}\left[\Div(\hat{F}_{n,X},F^{*}_{X})\right] + \mathbb{E}\left[\big|\ent(\hat F_{n,X})-\ent(F^{*}_X)\big|\right]+\frac{MC}{\sqrt{N}}+u.
\]
\end{theorem}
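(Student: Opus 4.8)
The plan is to start from the exact excess-risk identity of Proposition~\ref{prop:excessrisk} applied to $\hat{\Gamma}_{\varepsilon}$ (so that $F=\hat{F}_{n}$), and then take the outer expectation over the labelled sample $\mathcal{D}_n$, the unlabelled sample $\mathcal{D}_N$ and the randomisation $\zeta$. The first term of that identity is $\mathbb{E}_X[\Div(\hat{F}_{n,X},F^{*}_{X})\one_{\{\hat{\Gamma}_{\varepsilon}(X)\neq\re\}}]$; bounding the indicator by $1$ and taking expectations immediately gives a divergence contribution of order $\mathbb{E}[\Div(\hat{F}_{n,X},F^{*}_{X})]$. All the remaining work lies in the thresholding term $T:=\mathbb{E}[\,|\ent(F^{*}_X)-\lambda_\varepsilon|\,\one_{\{\hat{\Gamma}_{\varepsilon}(X)\neq\Gamma^{*}_{\varepsilon}(X)\}}]$.

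To control $T$ I would introduce an intermediate, \emph{population-calibrated} predictor $\tilde{\Gamma}_{\varepsilon}$ that keeps the estimate $\hat{F}_{n}$ but replaces the empirical calibration CDF $\hat{G}_{\widehat{\ent}_{\zeta}}$ by its population counterpart $G_{\widehat{\ent}_{\zeta}}$ (the conditional law of $\widehat{\ent}_{\zeta}(F^{*}_X)$ given $\mathcal{D}_n$); equivalently $\tilde{\Gamma}_{\varepsilon}$ rejects iff $\widehat{\ent}_{\zeta}(F^{*}_X)>\bar\lambda$ with $\bar\lambda:=G^{-1}_{\widehat{\ent}_{\zeta}}(1-\varepsilon)$. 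The uniform perturbation $\zeta$ makes $G_{\widehat{\ent}_{\zeta}}$ continuous, so $\tilde{\Gamma}_{\varepsilon}$ has rejection rate exactly $\varepsilon$ given $\mathcal{D}_n$, just as $\Gamma^{*}_{\varepsilon}$ does by Proposition~\ref{prop:optimalrule}. Writing $\one_{\{\hat{\Gamma}_{\varepsilon}\neq\Gamma^{*}_{\varepsilon}\}}\le\one_{\{\hat{\Gamma}_{\varepsilon}\neq\tilde{\Gamma}_{\varepsilon}\}}+\one_{\{\tilde{\Gamma}_{\varepsilon}\neq\Gamma^{*}_{\varepsilon}\}}$ then separates a calibration error from an entropy-estimation error. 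For the calibration error I would use $|\ent(F^{*}_X)-\lambda_\varepsilon|\le M$ from Assumption~\ref{ass:boundedentropy}, observe that $\hat{\Gamma}_{\varepsilon}$ and $\tilde{\Gamma}_{\varepsilon}$ can disagree only when $G_{\widehat{\ent}_{\zeta}}(\widehat{\ent}_{\zeta}(F^{*}_X))$ is within $\sup_t|\hat{G}_{\widehat{\ent}_{\zeta}}(t)-G_{\widehat{\ent}_{\zeta}}(t)|$ of the level $1-\varepsilon$, and note that by the probability integral transform (continuity of $G_{\widehat{\ent}_{\zeta}}$) the random variable $G_{\widehat{\ent}_{\zeta}}(\widehat{\ent}_{\zeta}(F^{*}_X))$ is uniform on $[0,1]$. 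This bounds the disagreement probability by twice the sup-norm deviation, whose expectation is $O(N^{-1/2})$ by the Dvoretzky--Kiefer--Wolfowitz inequality for the $N$ unlabelled points; this is exactly the mechanism behind Proposition~\ref{prop:distributionfree} and yields the $MC/\sqrt{N}$ term.

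For the entropy-estimation error $\mathbb{E}[\,|\ent(F^{*}_X)-\lambda_\varepsilon|\,\one_{\{\tilde{\Gamma}_{\varepsilon}\neq\Gamma^{*}_{\varepsilon}\}}]$ I would use that on the disagreement set the affine decision scores $\ent(F^{*}_X)-\lambda_\varepsilon$ and $\widehat{\ent}_{\zeta}(F^{*}_X)-\bar\lambda$ carry opposite signs, whence $|\ent(F^{*}_X)-\lambda_\varepsilon|\le|(\ent(F^{*}_X)-\widehat{\ent}_{\zeta}(F^{*}_X))-(\lambda_\varepsilon-\bar\lambda)|$. Since $\widehat{\ent}_{\zeta}(F^{*}_X)=\ent(\hat{F}_{n,X})+\zeta$ with $\mathbb{E}[\zeta]\le u$, the first piece produces the entropy difference $\mathbb{E}[\,|\ent(\hat{F}_{n,X})-\ent(F^{*}_X)|\,]$ together with the additive $u$ term, \emph{provided} the quantile gap $|\bar\lambda-\lambda_\varepsilon|$ can be absorbed.

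The hard part will be precisely this control of $|\bar\lambda-\lambda_\varepsilon|$: no density lower bound on the law of $\ent(F^{*}_X)$ is assumed, so the quantile map is not Lipschitz and $|\bar\lambda-\lambda_\varepsilon|$ cannot be bounded by the perturbation size by a naive stability argument. The key idea I would exploit is that $\Gamma^{*}_{\varepsilon}$ and $\tilde{\Gamma}_{\varepsilon}$ have \emph{identical} rejection rate $\varepsilon$, so their rejection regions have equal mass and their symmetric difference splits into two pieces of equal probability; on the piece where $\tilde{\Gamma}_{\varepsilon}$ rejects but $\Gamma^{*}_{\varepsilon}$ does not one has $|\bar\lambda-\lambda_\varepsilon|\le|\widehat{\ent}_{\zeta}(F^{*}_X)-\ent(F^{*}_X)|$ pointwise. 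This converts $|\bar\lambda-\lambda_\varepsilon|\,\mathbb{P}(\text{disagree})$ into an integral of $|\widehat{\ent}_{\zeta}(F^{*}_X)-\ent(F^{*}_X)|$, so that the whole entropy-estimation error collapses to the entropy difference plus $u$, with no residual quantile term. Assembling the divergence term from the first part, the $MC/\sqrt{N}$ calibration term, and this entropy term, and then tracking the multiplicative constants carefully, yields the stated bound; the delicate points are the equal-rate trick just described (which removes the quantile gap) and the role of the randomisation $\zeta$, which is what guarantees continuity of $G_{\widehat{\ent}_{\zeta}}$ and hence both the uniformity used in the calibration step and the exact $\varepsilon$-rate of $\tilde{\Gamma}_{\varepsilon}$.
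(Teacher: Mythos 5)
Your proposal follows the same core strategy as the paper's proof: introduce the population-calibrated pseudo-oracle $\tilde{\Gamma}_{\varepsilon}$, control the calibration discrepancy between $\hat{\Gamma}_{\varepsilon}$ and $\tilde{\Gamma}_{\varepsilon}$ via the probability integral transform plus Dvoretzky--Kiefer--Wolfowitz (this is exactly Proposition~\ref{prop:Ivarepsilon}), and then kill the quantile gap $|\tilde{\lambda}_\varepsilon-\lambda_\varepsilon|$ using the fact that $\tilde{\Gamma}_{\varepsilon}$ and $\Gamma^{*}_{\varepsilon}$ both reject with probability exactly $\varepsilon$. The one organizational difference is that you apply the excess-risk identity of Proposition~\ref{prop:excessrisk} once to $\hat{\Gamma}_{\varepsilon}$ and split only the disagreement indicator, whereas the paper writes $\mathcal{E}_{\lambda_\varepsilon}(\hat{\Gamma}_{\varepsilon})=\bigl(\mathcal{R}_{\lambda_\varepsilon}(\hat{\Gamma}_{\varepsilon})-\mathcal{R}_{\lambda_\varepsilon}(\tilde{\Gamma}_{\varepsilon})\bigr)+\mathcal{E}_{\lambda_\varepsilon}(\tilde{\Gamma}_{\varepsilon})$ and therefore pays the divergence term twice; your route would in fact give $1\cdot\mathbb{E}[\Div(\hat{F}_{n,X},F^{*}_{X})]$ instead of $2$, which is tighter and still implies the statement on that term.

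There is, however, one step where your mechanism as described does not reproduce the stated constants. To absorb $|\tilde{\lambda}_\varepsilon-\lambda_\varepsilon|\,\mathbb{P}(\tilde{\Gamma}_{\varepsilon}\neq\Gamma^{*}_{\varepsilon})$ you invoke a pointwise bound $|\tilde{\lambda}_\varepsilon-\lambda_\varepsilon|\leq|\widehat{\ent}_{\zeta}(F^{*}_{X})-\ent(F^{*}_{X})|$ on one of the two pieces of the symmetric difference and then use that the two pieces have equal mass. Since the full disagreement set has twice the mass of that piece, this yields an extra $2\,\mathbb{E}[|\widehat{\ent}_{\zeta}(F^{*}_{X})-\ent(F^{*}_{X})|]$ on top of the $\mathbb{E}[|\widehat{\ent}_{\zeta}(F^{*}_{X})-\ent(F^{*}_{X})|]$ already produced by the opposite-sign argument, i.e.\ a factor $3$ on the entropy term and a $3u$ instead of $u$; the resulting bound neither matches nor implies the one in the theorem. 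The paper avoids this by never taking absolute values of the quantile gap: on the piece where $\tilde{\Gamma}_{\varepsilon}$ accepts and $\Gamma^{*}_{\varepsilon}$ rejects one has $|\ent(F^{*}_{X})-\lambda_\varepsilon|\leq(\ent(F^{*}_{X})-\widehat{\ent}_{\zeta}(F^{*}_{X}))+(\tilde{\lambda}_\varepsilon-\lambda_\varepsilon)$, on the other piece the signs are reversed, and the two signed contributions $\pm(\tilde{\lambda}_\varepsilon-\lambda_\varepsilon)$ cancel exactly because the two pieces have equal probability. This is a constant-tracking repair rather than a conceptual gap --- all the ideas you need are already in your write-up --- but as stated your absorption step would not deliver the displayed inequality.
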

This Theorem provides an upper bound for the excess risk of $\hat{\Gamma}_{\varepsilon}$. This bound depends on the second Cramér's distance for estimating the conditional distribution function, the $\ell_1$-norm of $\widehat{\ent}$, a bound that controls the rejection rate of $\hat{\Gamma}_{\varepsilon}$, and the parameter associated with the randomization. 
Observe that the divergence term is bounded by the Wasserstein distance of order $1$, and the $\ell_1$-norm of $\widehat{\ent}$ can similarly be controlled by this distance (see Lemma~\ref{lem:boundedentropy}), which offers a graceful measure of discrepancies between distributions. This leads us to the following result.
\begin{cor}\label{cor:bound-Wasserstein}
Let $\varepsilon\in (0,1)$. Under Assumption~\ref{ass:nonatomic}-~\ref{ass:boundedentropy}, it holds that 
\[
\mathbb{E}\left[\mathcal{E}_{\lambda_\varepsilon}(\hat{\Gamma}_{\varepsilon})\right]
\leq 3\mathbb{E}\left[W_{1}(\hat{F}_{n,X},F_{X}^{*})\big|\right]+\frac{MC}{\sqrt{N}}+u.
\]
\end{cor}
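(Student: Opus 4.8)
The plan is to derive the corollary directly from Theorem~\ref{thm:consistency} by replacing the two data-dependent terms on its right-hand side---the expected divergence and the expected $\ell_1$ discrepancy between the entropies---with a single Wasserstein term. First I would record the pointwise comparison between the divergence and $W_1$. Since any two cumulative distribution functions $H,K$ take values in $[0,1]$, their difference satisfies $|H(u)-K(u)|\le 1$ for every $u$, so that $(H(u)-K(u))^2\le |H(u)-K(u)|$. Integrating over $\mathbb{R}$ yields $\Div(H,K)\le W_1(H,K)$; applying this with $H=\hat F_{n,X}$, $K=F^{*}_X$ and taking expectations gives $2\,\mathbb{E}[\Div(\hat F_{n,X},F^{*}_X)]\le 2\,\mathbb{E}[W_1(\hat F_{n,X},F^{*}_X)]$.

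Second I would control the entropy term by the same distance, which is exactly the content of Lemma~\ref{lem:boundedentropy}. For completeness, the bound rests on the algebraic identity $H(u)(1-H(u))-K(u)(1-K(u))=(H(u)-K(u))(1-H(u)-K(u))$; since $H,K\in[0,1]$ one has $|1-H(u)-K(u)|\le 1$, hence $|\ent(H)-\ent(K)|\le \int_{\mathbb{R}}|H(u)-K(u)|\,du = W_1(H,K)$. Specialising again to $H=\hat F_{n,X}$, $K=F^{*}_X$ and taking expectations yields $\mathbb{E}\big[|\ent(\hat F_{n,X})-\ent(F^{*}_X)|\big]\le \mathbb{E}[W_1(\hat F_{n,X},F^{*}_X)]$.

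Finally I would substitute these two estimates into the bound of Theorem~\ref{thm:consistency}. The coefficient $2$ on the divergence and the coefficient $1$ on the entropy discrepancy combine into $3\,\mathbb{E}[W_1(\hat F_{n,X},F^{*}_X)]$, while the remaining terms $MC/\sqrt{N}+u$ are carried over unchanged, giving the claimed inequality.

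There is no genuine obstacle here: both reductions rely only on the elementary bound $|H(u)-K(u)|\le 1$ valid for cumulative distribution functions, the comparison $t^2\le |t|$ for $|t|\le 1$, and the factorisation of the entropy difference already furnished by Lemma~\ref{lem:boundedentropy}. The only point requiring mild care is to confirm that these pointwise inequalities survive integration over $\mathbb{R}$ and the outer expectation, which follows by monotonicity of the integral and of expectation, the relevant quantities being finite thanks to Assumption~\ref{ass:boundedentropy}.
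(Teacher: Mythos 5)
Your proposal is correct and follows essentially the same route as the paper: the corollary is obtained by plugging the two bounds of Lemma~\ref{lem:boundedentropy}, namely $\Div(H,K)\le W_1(H,K)$ and $|\ent(H)-\ent(K)|\le W_1(H,K)$, into the bound of Theorem~\ref{thm:consistency}, so that $2\,\mathbb{E}[\Div]+\mathbb{E}[|\ent-\ent|]\le 3\,\mathbb{E}[W_1]$. Your rederivation of the lemma's two inequalities (via $t^2\le|t|$ for $|t|\le 1$ and the factorisation of the entropy difference) matches the paper's own proof of that lemma.
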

In particular, we can establish the following result.
\begin{cor}
\label{cor:consistencyexcessrik}
Let $\varepsilon \in (0,1)$. Assume that $u=u_n \to 0$ and $\mathbb{E}[W_{1}(\hat{F}_{n,X},F_{X}^{*})] \to 0$ as $n\to \infty$, then
\[
\mathbb{E}\left[\mathcal{E}_{\lambda_\varepsilon}(\hat{\Gamma}_{\varepsilon})\right]\to 0 \enspace \text{as}\enspace n,N \to \infty.
\]
\end{cor}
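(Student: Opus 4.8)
The plan is to read this off directly from the upper bound already established in Corollary~\ref{cor:bound-Wasserstein}, since the present statement is purely a limit assertion about that bound. First I would recall the inequality
\[
\mathbb{E}\left[\mathcal{E}_{\lambda_\varepsilon}(\hat{\Gamma}_{\varepsilon})\right]
\leq 3\,\mathbb{E}\left[W_{1}(\hat{F}_{n,X},F_{X}^{*})\right]+\frac{MC}{\sqrt{N}}+u,
\]
valid under Assumptions~\ref{ass:nonatomic}--\ref{ass:boundedentropy} for every $\varepsilon\in(0,1)$. The strategy is then simply to inspect the three terms on the right-hand side and show that each vanishes in the stated regime.

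Next I would take the limit term by term. By hypothesis $\mathbb{E}[W_{1}(\hat{F}_{n,X},F_{X}^{*})]\to 0$ as $n\to\infty$, so the first term vanishes; the randomization level $u=u_n\to 0$ by assumption, so the third term vanishes; and since $M$ and $C$ are fixed constants independent of the sample sizes, the middle term $MC/\sqrt{N}\to 0$ as $N\to\infty$. Hence the entire right-hand side tends to $0$ as $n,N\to\infty$.

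Finally I would invoke the nonnegativity of the excess risk to close the argument. This is immediate from the explicit formula in Proposition~\ref{prop:excessrisk}, where $\mathcal{E}_{\lambda_\varepsilon}(\hat\Gamma_\varepsilon)$ is written as a sum of two manifestly nonnegative expectations (a divergence term and an absolute-value term); equivalently, it follows from the optimality of $\Gamma^*_\varepsilon$ established in Proposition~\ref{prop:optimalrule}. In either case $\mathbb{E}[\mathcal{E}_{\lambda_\varepsilon}(\hat{\Gamma}_\varepsilon)]\geq 0$, and sandwiching this nonnegative quantity between $0$ and a right-hand side converging to $0$ yields the claim by the squeeze theorem. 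I do not anticipate any genuine difficulty: the substantive work is already contained in Corollary~\ref{cor:bound-Wasserstein}, and the only point requiring a moment's attention is confirming the sign of the excess risk, so that a one-sided upper bound indeed forces convergence to $0$.
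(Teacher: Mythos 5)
Your proposal is correct and matches the paper's (implicit) argument: the paper presents this corollary as an immediate consequence of Corollary~\ref{cor:bound-Wasserstein}, obtained by letting each of the three terms in that bound tend to zero, exactly as you do. Your added remark on the nonnegativity of the excess risk (via Proposition~\ref{prop:excessrisk}) is a harmless and correct completion of the squeeze argument.
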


Proposition~\ref{prop:Ivarepsilon} and Corollary~\ref{cor:consistencyexcessrik} together establish that $\hat{\Gamma}_{\varepsilon}$ performs asymptotically as well as $\Gamma_{\varepsilon}^{*}$, both in terms of error and rejection rate, provided that the estimator of $\hat{F}_{n,X}$ is consistent with respect to the Wasserstein distance.
\subsection{Rates of convergence for distributional KNN}
\label{sec:rates}
In this section, we apply our approach to the distributional $k$-nearest neighbors (KNN) for both the estimation of the conditional distribution $F^*_x$ and the cumulative distribution of the entropy function $\ent(F^*_X)$.\\
 The $k$-nearest neighbor ($k$-NN) method is a well-known and widely used tool in regression and classification. For any $x\in \mathbb{R}^d$, we denote by $(X_{(i,n)}(x), Y_{(i,n)}(x))$, $i=1,\ldots,n$, the reordered data according to the Euclidean distance in $\mathbb{R}^d$, meaning that 
 \[
 \|X_{(i,n)}(x)-x\|\leq \|X_{(j,n)}(x)-x\|
 \]
for all $i<j$ in $\{1,\ldots,n\}$. Here, $X_{(i,n)}(x)$ is the $i$-th nearest neighbor of $x$ and $Y_{(i,n)}(x))$ is the $Y_i$ corresponding to $X_{(i,n)}(x)$. In case of distance ties, we declare $X_{(i,n)}(x)$ is considered to be closer to $x$. The estimator of the k-nearest neighbors of the regression function is therefore written as
\[
\hat{f}_n(x)=\frac{1}{k}\sum_{i=1}^{k}Y_{(i,n)}(x)=\sum_{i=1}^{n}w_{ni}(x)Y_i,
\] 
where the weights $w_{ni}(x)$, $i=1,\ldots,n$, are non negative, sum to one, and are defined as
\[
w_{ni}(x) =\begin{cases}
  \frac{1}{k}  & \text{if $X_i$ is one of the $k$ nearest neighbours of $x$}, \\
  0     & \text{otherwise}.
  \end{cases}
\]
The statistical properties of this estimator are extensively discussed in the following references books~\citep{Gyofri_Kohler_Krzyzak_Walk02, Biau_Devroye15}.
 Within the framework of distributional regression, this method can be easily adapted to construct the predictive distribution. By extending the basic principles of $k$-NN regression, we define the predictive distribution as:
\[
\hat F_{n,x}(y)=\frac{1}{k}\sum_{i=1}^{k} \mathds{1}_{\{Y_{(i,n)}(x)\leq y\}}=\sum_{i=1}^{n} w_{ni}(x)\mathds{1}_{\{Y_i\leq y\}}.
\]
This method, referred to as the Analog method, has been widely used in the field of statistical post-processing for weather forecasts \citep{ZT89}. Convergence rate with respect to the Wasserstein distance  of order $1$ have been recently established in \citet{Dombry_24} under the following following assumption.

\begin{assumption} There exists a compact $\mathcal{X}\in\mathbb{R}^d$ and constants  $ M,L>0$, $h\in (0,1]$ such that:
\begin{enumerate}[label=\textnormal{(\roman*)}]
\item \label{ass3:i} $\mathbb{E}[|Y|]<\infty$ and $\mathbb{P}(X\in\mathcal{X})=1$,
\item \label{ass3:ii} $W_{1}(F_{x}^*,F_{x'}^*)\leq L\|x-x'\|^h$ for all $x,x' \in \mathcal{X}$,
\item \label{ass3:iii} $\int_{\mathbb{R}}\sqrt{F^{*}_x(y)(1-F^{*}_x(y))}dy\leq M \enspace \textit{for all} \enspace x\in \mathcal{X}$.
\end{enumerate}  
\label{ass:cramer_wasserstein}
\end{assumption}
 Because $z\leq \sqrt{z}$ for $z\in[0,1]$, it is straightforward to see that Assumption~\ref{ass:cramer_wasserstein}~\ref{ass3:iii} implies Assumption~\ref{ass:boundedentropy}.

\begin{theorem}{\citep{Dombry_24}}
Under Assumption~\ref{ass:cramer_wasserstein},  there exist  a constant $c_d>0$ depending on the dimension $d$ only, such that 
\[\mathbb{E}\left[W_1(\hat F_{n,X},F^{*}_{X})\right]\leq \begin{cases} L8^{h/2} (k/n)^{h/2} +Mk^{-1/2}& \mbox{if $d=1$}, \\
Lc_d^{h/2} (k/n)^{h/d} +Mk^{-1/2}
& \mbox{if $d\geq 2$}. \end{cases}
\]
 \label{thm:ratesW1}
\end{theorem}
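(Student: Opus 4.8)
The plan is to bound the expected Wasserstein error $\mathbb{E}[W_1(\hat F_{n,X},F^*_X)]$ by a triangle-inequality decomposition into a \emph{bias} term and a \emph{variance} term, following the classical analysis of $k$-NN estimators but adapted to the Wasserstein metric. First I would introduce the ``oracle'' distribution obtained by averaging the true conditional laws over the $k$ nearest neighbours, namely $\bar F_{n,x}(y)=\frac1k\sum_{i=1}^k F^*_{X_{(i,n)}(x)}(y)$. Using the integral form $W_1(F,G)=\int_{\mathbb{R}}|F(y)-G(y)|\,dy$ and the triangle inequality inside the integral, one writes
\[
W_1(\hat F_{n,x},F^*_x)\leq \int_{\mathbb{R}}\big|\hat F_{n,x}(y)-\bar F_{n,x}(y)\big|\,dy+\int_{\mathbb{R}}\big|\bar F_{n,x}(y)-F^*_x(y)\big|\,dy,
\]
the second integral being the bias and the first the stochastic fluctuation.

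For the \textbf{bias term}, I would exploit convexity of $W_1$ in its first argument: since $\bar F_{n,x}$ is the uniform mixture of the $F^*_{X_{(i,n)}(x)}$, the integrated difference is at most $\frac1k\sum_{i=1}^k W_1(F^*_{X_{(i,n)}(x)},F^*_x)$, and Assumption~\ref{ass:cramer_wasserstein}\ref{ass3:ii} bounds each summand by $L\|X_{(i,n)}(x)-x\|^h$. Taking expectations reduces the problem to controlling $\mathbb{E}[\|X_{(k,n)}(X)-X\|^h]$, the expected $h$-th moment of the distance to the $k$-th nearest neighbour. This is where the dimension-dependent constants enter: standard $k$-NN distance estimates on a bounded support give $\mathbb{E}[\|X_{(k,n)}(X)-X\|^h]\lesssim (k/n)^{h/d}$ for $d\geq 2$ (with the $d=1$ case sharper, explaining the separate line and the explicit $8^{h/2}$). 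I would invoke these moment bounds to produce the $(k/n)^{h/d}$ term.

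For the \textbf{variance term}, conditionally on the neighbour locations $X_{(1,n)}(x),\dots,X_{(k,n)}(x)$, the indicators $\mathds{1}_{\{Y_{(i,n)}(x)\leq y\}}$ are independent Bernoulli variables with means $F^*_{X_{(i,n)}(x)}(y)$, so $\mathbb{E}[(\hat F_{n,x}(y)-\bar F_{n,x}(y))^2\mid\text{neighbours}]=\frac{1}{k^2}\sum_{i=1}^k F^*_{X_{(i,n)}(x)}(y)(1-F^*_{X_{(i,n)}(x)}(y))$. Applying Jensen's inequality to pull the absolute value inside the square root and then integrating over $y$, I would bound $\mathbb{E}\int_{\mathbb{R}}|\hat F_{n,x}-\bar F_{n,x}|\,dy$ by $k^{-1/2}\cdot\frac1k\sum_{i}\int_{\mathbb{R}}\sqrt{F^*_{X_{(i,n)}(x)}(y)(1-F^*_{X_{(i,n)}(x)}(y))}\,dy$, and Assumption~\ref{ass:cramer_wasserstein}\ref{ass3:iii} caps the inner integral by $M$, yielding the $Mk^{-1/2}$ term. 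Care is needed to justify interchanging the integral over $y$ with the square root via Jensen and to handle the conditioning cleanly.

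The main obstacle I anticipate is the \emph{geometric} control of the nearest-neighbour distances that produces the exact constant $c_d$ and the clean $(k/n)^{h/d}$ rate --- in particular, handling distance ties (the excerpt's tie-breaking convention), the boundary effects of the compact support $\mathcal{X}$, and the passage from an in-probability distance bound to a bound on the expected $h$-th moment. Since this is precisely Theorem~\ref{thm:ratesW1} quoted from \citet{Dombry_24}, I would lean on that reference for the sharp moment estimates rather than re-deriving them, and concentrate the self-contained argument on the variance term and the convexity-based reduction of the bias.
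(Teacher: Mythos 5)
This theorem is imported verbatim from \citet{Dombry_24}; the paper itself contains no proof of it, so your proposal can only be measured against the standard argument, which is indeed the bias--variance decomposition around the oracle mixture $\bar F_{n,x}=\frac1k\sum_{i=1}^kF^*_{X_{(i,n)}(x)}$ that you describe. Your treatment of the bias term is sound: $\int_{\mathbb{R}}|\bar F_{n,x}-F^*_x|\,dy\le\frac1k\sum_{i=1}^kW_1(F^*_{X_{(i,n)}(x)},F^*_x)\le L\|X_{(k,n)}(x)-x\|^h$ by Assumption~\ref{ass:cramer_wasserstein}~\ref{ass3:ii} and monotonicity of the neighbour distances, and deferring the moment bound $\mathbb{E}[\|X_{(k,n)}(X)-X\|^h]\lesssim(k/n)^{h/d}$ (the sole source of $c_d$ and of the explicit $8^{h/2}$ when $d=1$) to the reference is acceptable for a quoted result.

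The variance step, however, rests on a wrong inequality. Conditionally on the covariates you correctly obtain $\mathbb{E}\bigl[|\hat F_{n,x}(y)-\bar F_{n,x}(y)|\bigr]\le\frac1k\bigl(\sum_{i=1}^kF_i(y)(1-F_i(y))\bigr)^{1/2}$ with $F_i=F^*_{X_{(i,n)}(x)}$, but the bound you then assert, $\bigl(\sum_ia_i\bigr)^{1/2}\le k^{-1/2}\sum_i\sqrt{a_i}$, is the reverse of Cauchy--Schwarz and fails whenever the $a_i$ are not all equal (take $k=2$, $a_1=1$, $a_2=0$). The obvious repair via subadditivity, $\bigl(\sum_ia_i\bigr)^{1/2}\le\sum_i\sqrt{a_i}$, loses the factor $k^{-1/2}$ and only yields the useless bound $M$ in place of $Mk^{-1/2}$. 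The correct route is $\frac1k\bigl(\sum_iF_i(1-F_i)\bigr)^{1/2}=k^{-1/2}\bigl(\frac1k\sum_iF_i(1-F_i)\bigr)^{1/2}\le k^{-1/2}\bigl(\bar F_{n,x}(1-\bar F_{n,x})\bigr)^{1/2}$, which follows from $\frac1k\sum_iF_i^2\ge\bar F_{n,x}^2$; one must then show that $\int_{\mathbb{R}}\sqrt{\bar F_{n,x}(y)(1-\bar F_{n,x}(y))}\,dy$ is bounded by $M$ (up to constants), i.e.\ that Assumption~\ref{ass:cramer_wasserstein}~\ref{ass3:iii} extends from the individual conditional laws to their mixtures over the $k$ neighbours --- a step your outline does not address and which is where the real work of the variance bound lies.
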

The bound is non asymptotic and holds for all $n\geq 1$ and $1\leq k\leq n$. The usual conditions $k\to\infty$, $k/n\to 0$ ensure consistency. Furthermore, in dimension $d\geq 2$, it is known that optimizing with respect to $k$ leads to a minimax rate of convergence of order $n^{-h/(2h+d)}$, see Theorem~3 in \citet{Dombry_24}. 

Recall that for the distributional regression with reject option, the  excess risk of our approach is bounded thanks to the Wasserstein distance. Thus, applying Corollary~\ref{cor:bound-Wasserstein} and Theorem~\ref{thm:ratesW1}, we obtain the following result.
\begin{cor}
Suppose Assumptions~\ref{ass:nonatomic} and~\ref{ass:cramer_wasserstein} are satisfied and let $\varepsilon \in (0,1)$ and $u=n^{-1}$. Then, 
\[
\mathbb{E}\left[\mathcal{E}_{\lambda_\varepsilon}(\hat{\Gamma}_{\varepsilon})\right]\leq\begin{cases}
  3L8^{h/2} (k/n)^{h/2} +3Mk^{-1/2}+MCN^{-1/2}+n^{-1} & \text{if} \enspace d=1,\\
 3Lc_d^{h/2} (k/n)^{h/d} +3Mk^{-1/2}+MCN^{-1/2}+n^{-1}    & \text{if} \enspace d\geq 2.
  \end{cases}
\]
\end{cor}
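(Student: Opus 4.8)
The plan is to obtain the result by directly chaining the two previously established bounds, so the proof is essentially a substitution with a verification that all hypotheses transfer correctly. The starting point is Corollary~\ref{cor:bound-Wasserstein}, which gives
\[
\mathbb{E}\left[\mathcal{E}_{\lambda_\varepsilon}(\hat{\Gamma}_{\varepsilon})\right]
\leq 3\,\mathbb{E}\left[W_{1}(\hat{F}_{n,X},F_{X}^{*})\right]+\frac{MC}{\sqrt{N}}+u.
\]
Before invoking this, I would first check that its hypotheses hold under the present assumptions: Assumption~\ref{ass:nonatomic} is assumed directly, and Assumption~\ref{ass:boundedentropy} must be verified. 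This last point is exactly the observation already recorded after Assumption~\ref{ass:cramer_wasserstein}, namely that $z\leq\sqrt{z}$ for $z\in[0,1]$ forces $\ent(F^*_x)=\int_{\mathbb{R}}F^*_x(y)(1-F^*_x(y))\,dy\leq\int_{\mathbb{R}}\sqrt{F^*_x(y)(1-F^*_x(y))}\,dy\leq M$ for all $x\in\mathcal{X}$, so Assumption~\ref{ass:cramer_wasserstein}\ref{ass3:iii} implies Assumption~\ref{ass:boundedentropy}. Hence Corollary~\ref{cor:bound-Wasserstein} applies.

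The second step is to bound the remaining expected Wasserstein term using Theorem~\ref{thm:ratesW1}, which under Assumption~\ref{ass:cramer_wasserstein} provides the distributional $k$-NN rate $L8^{h/2}(k/n)^{h/2}+Mk^{-1/2}$ when $d=1$ and $Lc_d^{h/2}(k/n)^{h/d}+Mk^{-1/2}$ when $d\geq 2$. Substituting either branch into the displayed inequality and multiplying the Wasserstein bound by the factor $3$ yields the first two terms of each case, while $MC/\sqrt{N}=MCN^{-1/2}$ is carried over unchanged.

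The final step is simply to insert the prescribed randomization level $u=n^{-1}$, which produces the last additive term $n^{-1}$ in both branches and gives exactly the stated inequality. There is no genuine obstacle here, since the result is a deterministic composition of two inequalities already proved; the only point requiring care is the hypothesis-matching in the first step, ensuring that Assumption~\ref{ass:cramer_wasserstein} is strong enough to trigger both Corollary~\ref{cor:bound-Wasserstein} (via the entropy bound) and Theorem~\ref{thm:ratesW1}. I would therefore present the argument as a one-line chaining of the two cited results, with the entropy-bound implication stated explicitly for completeness.
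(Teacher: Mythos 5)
Your proof is correct and matches the paper's own (implicit) derivation exactly: the paper obtains this corollary by chaining Corollary~\ref{cor:bound-Wasserstein} with Theorem~\ref{thm:ratesW1} and setting $u=n^{-1}$, with Assumption~\ref{ass:boundedentropy} supplied by Assumption~\ref{ass:cramer_wasserstein}\ref{ass3:iii} via $z\leq\sqrt{z}$ on $[0,1]$. Your explicit verification of the hypothesis transfer is the only nontrivial point, and you handle it as the paper does.
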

Again, this upper bound for the expected excess risk is non asymptotic and holds for all $n\geq 1$ and $1\leq k\leq n$. The first two terms are bias and variance terms from the estimation of the conditional distribution, the third term is due to the calibration of the procedure when estimating the entropy function  and the fourth term to the randomization with $u=n^{-1}$. 

Optimizing with respect to $k$, we obtain the following rate of convergence for the oracle with $k\asymp n^{\frac{h}{h+1}}$ if $d=1$ and $k\asymp n^{\frac{2h}{2h+d}}$ if $d\geq 2$:
\[
\mathbb{E}\left[\mathcal{E}_{\lambda_\varepsilon}(\hat{\Gamma}_{\varepsilon})\right]\lesssim\begin{cases}
  n^{-\frac{h}{2h+2}}+N^{-1/2}  & \text{if} \enspace d=1,\\
 n^{-\frac{h}{2h+d}}+N^{-1/2}    & \text{if} \enspace d\geq 2.
  \end{cases}
\]
We can deduce that if the number of unlabelled data $N$ is sufficiently large, then this rate becomes identical to the rate of convergence of $\hat F_{n,X}$ with respect to the Wasserstein distance. In the case where $N<n$, and $n^{-\frac{h}{2h+2}}$ (when $d=1$) or $n^{-\frac{h}{2h+d}}$  (when $d>2$) goes faster to zero, this term remains limiting. However, it is reasonable to think that, in our situation, having enough data will make this term negligible compared to the others.

\section{Numerical results}
\label{sec:numerical}
This section examines the numerical performance of the proposed plug-in $\varepsilon$-predictor through a simulation study. The source code for these experiments is publicly available at \url{https://github.com/ZaouiAmed/DistributionalRegression_RejectOption.}\\
\\{\bf{Datasets.}} The performance is evaluated on three benchmark datasets: QSAR Aquatic Toxicity, Airfoil Self-Noise, and Concrete Compressive Strength, all sourced from the UCI Machine Learning Repository. A brief description of these datasets is provided below.
\begin{itemize}
\item The \emph{QSAR Aquatic Toxicity}, referred to as \texttt{qsar}, consists of $546$ samples with $8$ numerical features used to predict acute toxicity in Pimephales promelas. The toxicity output ranges from $0.12$ to $10.05$, with evidence of low heteroscedasticity.
\item The \emph{Airfoil Self-Noise dataset}, referred to as \texttt{airfoil}, contains 1503 observations with 5 features measured from aerodynamic and acoustic tests. The output, representing the scaled sound pressure level in decibels, ranges from 103 to 140 and exhibits strong heteroscedasticity.
\item The \emph{Concrete compressive strength}, referred to as \texttt{concrete}, contains $1030$ observations with $8$ features used to predict The concrete compressive strength. This target variable, which represents the maximum compressive strength the concrete can withstand, ranges from $2.33$ to $82.6$ megapascals (MPa). The dataset demonstrates strong heteroscedasticity.
\end{itemize}
The first two datasets employed in the numerical experiments within the context of heteroscedastic regression with reject option~\citep{Zaoui2020} and distributional regression~\citep{Dombry_Zaoui24}.\\
\\{\textbf{Models.}} Recall that the plug-in predictor depends on estimators for the conditional function and the entropy function. The entropy function estimator is derived using the plug-in rule and relies on the conditional distribution function estimator. We construct two plug-in $\varepsilon$-predictors, each based on  distributional random forests (DRF) and distributional k-nearest neighbors (KNN) models, respectively. We provide a brief description of DRF.
\\ \underline{Distributional Random Forests (DRF).} Random Forest \citep{Breiman_2001} is a powerful machine learning algorithm in the supervised learning category, suitable for both classification and regression tasks. It works by creating a collection of decision trees, each trained on different random subsets of the data. The final prediction is made by aggregating the results from all the trees, which helps improve accuracy and reduce overfitting. \citet{Cevid_MichelNBM22} proposed Random Forests for distributional regression. Let $T_1,\ldots,T_B$ be the randomized regression trees built on bootstrap samples of the original data. By applying the weights from these Random Forests to standard regression, the predictive distribution is derived as
\begin{eqnarray*}
\hat F_{n,x}(y)=\sum_{i=1}^n \left(\frac{1}{B}\sum_{b=1}^B \frac{\mathds{1}_{\{X_i\in L_b(x)\}}}{|L_b(x)|}\right)\mathds{1}_{\{Y_i\leq y\}}
=\sum_{i=1}^n w_{ni}(x)\mathds{1}_{\{Y_i\leq y\}},
\end{eqnarray*}
where $L_b(x)$ denotes the leaf containing $x$ in the tree $T_b$. The weights $w_{ni}(x)$ are positive and add up to $1$.
We apply the implementations of these methods from the R packages \texttt{KernelKnn} and \texttt{DRF}. For DRF, preliminary analysis suggests that the most reasonable choices for the key parameters include \texttt{num.trees} $= 1000$, \texttt{sample.fraction} $= 0.9$, \texttt{min.node.size} $=1$, with the default values for the other parameters. The parameters $\widehat{\texttt{k}}$ and $\widehat{\texttt{mtry}}$ are selected via model selection (see \citet{Dombry_Zaoui24} for more details). Finally, the R package \texttt{ScoringRule} is used to compute the CRPS.\\
\\{\bf{Methodology.}} For all datasets, the data is divided into three subsets: $50\%$ labeled training data, $20\%$ unlabeled training data, and $30\%$ test data.  
\begin{itemize}
    \item The labeled training set is used to calculate the estimators $\hat F_{n,x}$ using KNN and DRF.
    \item The unlabeled training set is used to compute the empirical cumulative distribution function of $\widehat{\ent}_{\zeta}$, with a random perturbation $\zeta \sim \mathcal{U}([0, 10^{-10}])$ . The explicit formula for $\widehat{\ent}$ for the KNN and DRF models is provided in Lemma~\ref{lem:expliciteformulaENT} (Appendix).
\end{itemize}
For each $\varepsilon \in \{i/10 : i = 0, \ldots, 9\}$ and each plug-in $\varepsilon$-predictor, we calculate the empirical rejection rate ($\hat{r}$) and the empirical error ($\widehat{\text{Err}}$).
 This process is repeated 100 times, and we report the average performance along with its standard deviation on the test data.\\
 \\ \textbf{Results.} The results obtained from our experiments are summarized in Table~\ref{result1} and Figure~\ref{fig:result}. We make the following observations. First, the empirical errors of the plug-in $\varepsilon$-predictors decrease with respect to $\varepsilon$ for both datasets, see Figure~\ref{fig:result}. This indicates that the rejection option contributes to improving the risk. This shows that the method works well. 
 Additionally, the empirical rejection rates are very close to the expected values, confirming that the approach is accurate, see Table~\ref{result1}. These results strongly support our theory and give us confidence that the method behaves as expected. Second, it is important to emphasize that a small unlabeled dataset allows for effective control of the rejection rate.  Finally, we observe that the approach based on DRF is always better than the KNN-based approach. We conclude that better calibration (rather than estimation) of the thresholding of the entropy for CRPS leads to a more effective methodology.

\begin{table}[!ht]
\caption{Performances of the three plug-in $\varepsilon$-predictors on the real datasets $\texttt{qsar}$, $\texttt{concrete}$, and $\texttt{airfoil}$.}
\centering
{\tiny
{\setlength{\tabcolsep}{9pt}
\vspace*{0.25cm}
\begin{tabular}{||l||cc|cc||cc|cc||}
\hline
\multicolumn{1}{c}{} & \multicolumn{4}{c}{\texttt{qsar}} & \multicolumn{4}{c}{\texttt{concrete}} \\ \hline
\multicolumn{1}{c}{} & \multicolumn{2}{c}{\texttt{DRF}} & \multicolumn{2}{c}{\texttt{KNN}} & \multicolumn{2}{c}{\texttt{DRF}} & \multicolumn{2}{c}{\texttt{KNN}} \\ \hline
\noalign{\smallskip}
\\ \hline \noalign{\smallskip}
$\varepsilon$ & $\widehat{\err}$ & $\hat{r}$ & $\widehat{\err}$ & $\hat{r}$ & $\widehat{\err}$ & $\hat{r}$ & $\widehat{\err}$ & $\hat{r}$  \\ \hline \noalign{\smallskip}
$0$   & 0.65 (0.04) &  0.00 (0.00) & 0.85 (0.05) & 0.00 (0.00)  &  3.55 (0.14)  & 0.00 (0.00)  & 5.80 (0.25) & 0.00 (0.00)\\
$0.1$ & 0.59 (0.04) & 0.10 (0.03)  & 0.81 (0.06) & 0.10 (0.04) &  3.23 (0.14) & 0.10 (0.03) & 5.38 (0.35) & 0.10 (0.03) \\
$0.2$ & 0.55 (0.03) &  0.20 (0.05) & 0.79 (0.06) & 0.20 (0.05) &  3.04 (0.14) & 0.21 (0.03) & 5.17 (0.35) & 0.20 (0.04) \\
$0.3$ & 0.52 (0.04) & 0.29 (0.06) & 0.75 (0.07) & 0.30 (0.07) & 2.88 (0.14) & 0.30 (0.04) & 4.98 (0.36) & 0.30 (0.05)  \\
$0.4$ & 0.50 (0.04) & 0.39 (0.06)  & 0.73 (0.09) & 0.40 (0.06) &  2.75 (0.15) & 0.40 (0.04) & 4.66 (0.39) & 0.40 (0.05)  \\
$0.5$ & 0.46 (0.04) & 0.50 (0.07) & 0.69 (0.10) & 0.51 (0.07)  &  2.61 (0.14) & 0.49 (0.05) & 4.56 (0.43) & 0.50 (0.05) \\
$0.6$ & 0.44 (0.04) & 0.60 (0.06)  & 0.63 (0.10) & 0.60 (0.06)  & 2.47 (0.15) & 0.59 (0.04) & 4.46 (0.45) & 0.60 (0.04)  \\
$0.7$ & 0.41 (0.05) & 0.70 (0.05)  & 0.57 (0.12) & 0.70 (0.05)  &  2.29 (0.15) & 0.70 (0.04) & 4.16 (0.47) & 0.70 (0.04)\\
$0.8$ & 0.39 (0.06) &  0.81 (0.04) & 0.51 (0.15) & 0.81 (0.05) &  2.12 (0.21)& 0.80 (0.03)  & 3.93 (0.59) & 0.80 (0.04)\\
$0.9$ & 0.37 (0.09) &  0.91 (0.03) & 0.44 (0.15) & 0.90 (0.04) & 1.92 (0.22) & 0.90 (0.03) & 3.63 (0.83) & 0.90 (0.03) \\
\end{tabular}}
}

\quad
{\tiny
{\setlength{\tabcolsep}{9pt}
\vspace*{0.25cm}
\begin{tabular}{||l || cc | cc || }
\multicolumn{1}{c}{} &  \multicolumn{4}{c}{{\texttt{airfoil}}}\\ \hline
\multicolumn{1}{c}{}  &  \multicolumn{2}{c}{{\texttt{DRF}}} & \multicolumn{2}{c}{{\texttt{KNN}}} 
\\ \hline \noalign{\smallskip}
$\varepsilon$ & $\widehat{\err}$ & $\hat{r}$ & $\widehat{\err}$ & $\hat{r}$ \\ \hline \noalign{\smallskip}
$0$   &  1.53 (0.05) & 0.00 (0.00)  & 3.41 (0.11) & 0.00 (0.00) \\
$0.1$ & 1.41 (0.05) & 0.10 (0.02) & 3.29 (0.11) & 0.10 (0.02) \\
$0.2$ &  1.34 (0.05) &  0.20 (0.03) & 3.18 (0.13) & 0.21 (0.03) \\
$0.3$ &  1.26 (0.06) &  0.30 (0.03)&  3.05 (0.13) & 0.30 (0.03) \\
$0.4$ &  1.19 (0.05) & 0.40 (0.04) & 2.85 (0.13) & 0.40 (0.06) \\
$0.5$ &  1.11 (0.06)  & 0.50 (0.04) & 2.73 (0.14) & 0.50 (0.04) \\
$0.6$ &  1.03 (0.05) & 0.60 (0.04) & 2.54 (0.15) & 0.60 (0.04) \\
$0.7$ &  0.97 (0.06) & 0.70 (0.03) & 2.43 (0.18) & 0.70 (0.03) \\
$0.8$ &  0.91 (0.07) & 0.79 (0.03) & 2.29 (0.24) & 0.80 (0.03) \\
$0.9$ &  0.78 (0.09) & 0.90 (0.02) & 2.10 (0.32) & 0.90 (0.02) \\
\end{tabular}}
}
\label{result1}
\end{table}

\begin{figure}[!ht]
\begin{center}
\begin{tabular}{ccc}
\includegraphics[height = 0.33 \columnwidth]{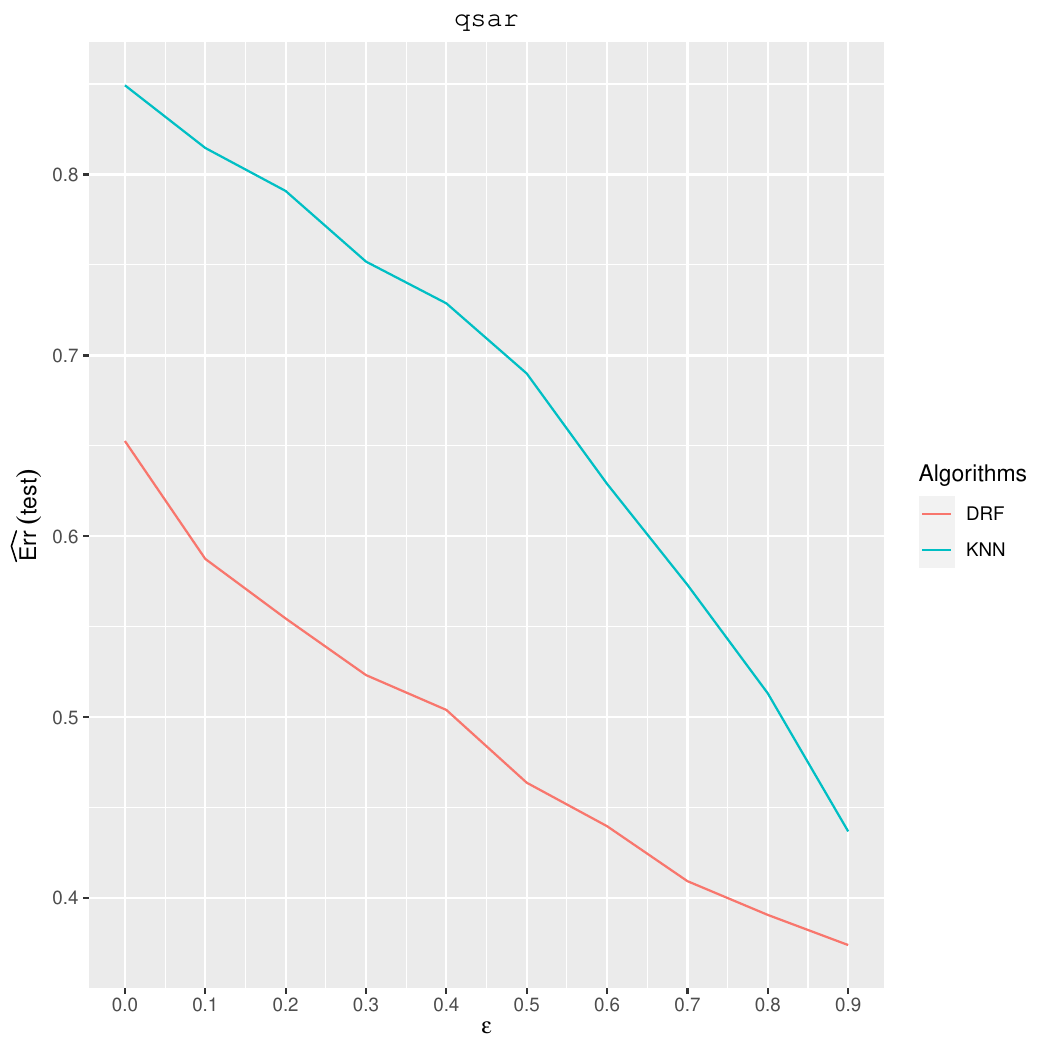}  &
\includegraphics[height = 0.33 \columnwidth]{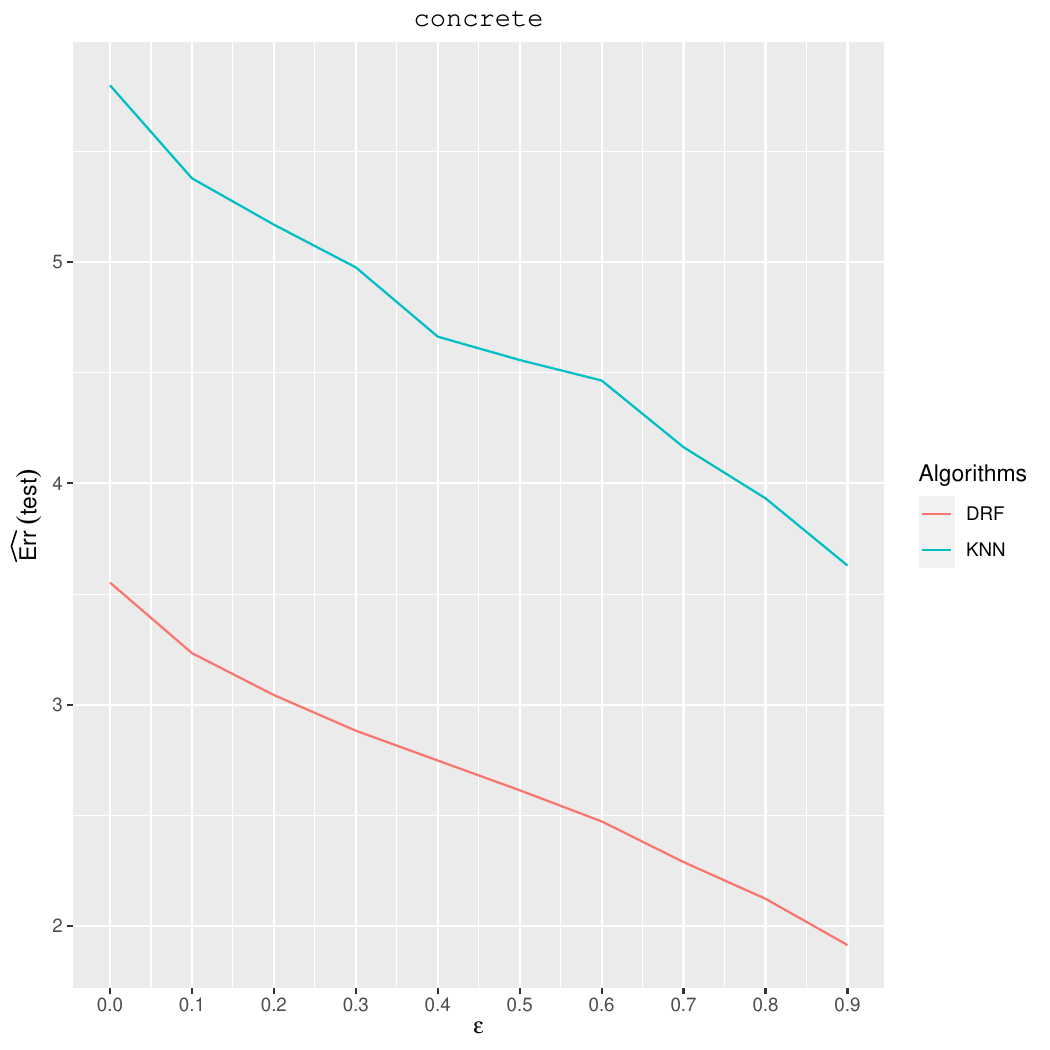}  &
\includegraphics[height = 0.33 \columnwidth]{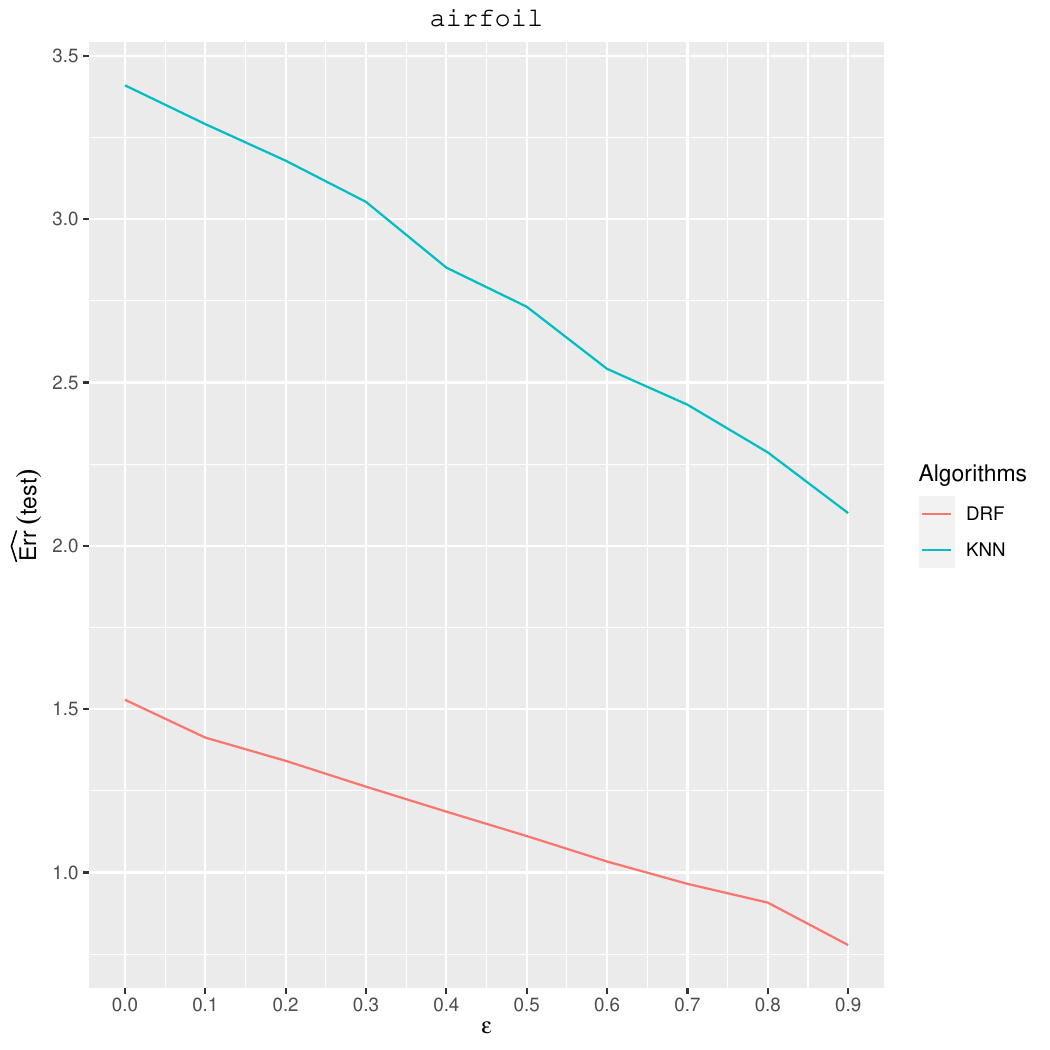} \\
\end{tabular}
\end{center}
\caption{\label{fig:result}
Visual description of the performance of two plug-in $\varepsilon$-predictors on the $\texttt{qsar}$, \texttt{concrete}, and $\texttt{airfoil}$ datasets.}
\end{figure}
\section{Conclusion and perspectives}
In this work, we have studied the use of the reject option in the distributional regression setting. When the risk is measured with the CRPS and the rejection rate is fixed, the optimal rule was given and we have shown in particular that the reject option must be used when the entropy function of the CRPS exceeds some threshold. Using the plug-in principle, we have proposed  to estimate the optimal rule with an approach applicable to any estimator of the conditional distribution function. General consistency results for the rejection rate and excess risk have been derived. Additionally, we established more refined rates of convergence when the conditional distribution function is estimated using the ditributional $k$-nearest neighbour method.  Our future work will focus on studying the problem in high-dimensional settings. The objective will be to adapt and extend our approach to ensure its effectiveness and robustness in more complex and large-scale contexts.
\section{Acknowledgement}
This work was supported by the Chrysalide Nouveaux Arrivants project, LmB, UMLP.
\bibliographystyle{apalike}
\bibliography{sample.bib}

\begin{thebibliography}{}

\bibitem[Al~Masry et~al., 2024]{AlMasry2024}
Al~Masry, Z., Pic, R., Domby, C., and Devalland, C. (2024).
\newblock A new methodology to predict the oncotype scores based on
  clinico-pathological data with similar tumor profiles.
\newblock {\em Breast Cancer Res Treat}, 203:587--598.

\bibitem[Baran and Lerch, 2015]{Baran2015}
Baran, S. and Lerch, S. (2015).
\newblock Log‐normal distribution based ensemble model output statistics
  models for probabilistic wind‐speed forecasting.
\newblock {\em Quarterly Journal of the Royal Meteorological Society},
  141(691):2289–2299.

\bibitem[Biau and Devroye, 2015]{Biau_Devroye15}
Biau, G. and Devroye, L. (2015).
\newblock {\em Lectures on the Nearest Neighbor Method}.
\newblock Springer Series in the Data Sciences. Springer New York.

\bibitem[Breiman, 2001]{Breiman_2001}
Breiman, L. (2001).
\newblock Random forests.
\newblock {\em Machine Learning}, 45.

\bibitem[Chow, 1957]{Chow57}
Chow, C. (1957).
\newblock An optimum character recognition system using decision functions.
\newblock {\em IRE Transactions on Electronic Computers}, EC-6(4):247--254.

\bibitem[Chow, 1970]{Chow70}
Chow, C. (1970).
\newblock On optimum error and reject trade-off.
\newblock {\em IEEE Trans. Inform. Theory}, 16:41--46.

\bibitem[Chzhen et~al., 2021]{chzhen_al2021}
Chzhen, E., Denis, C., and Hebiri, M. (2021).
\newblock Minimax semi-supervised set-valued approach to multi-class
  classification.
\newblock {\em Bernoulli}, 27(4):2389--2412.

\bibitem[Denis and Hebiri, 2020]{denis_hebiri_2020}
Denis, C. and Hebiri, M. (2020).
\newblock Consistency of plug-in confidence sets for classification in
  semi-supervised learning.
\newblock {\em J. Nonparametr. Stat.}, 32(1):42--72.

\bibitem[Dombry et~al., 2024]{Dombry_24}
Dombry, C., Modeste, T., and Pic, R. (2024).
\newblock Stone's theorem for distributional regression in wasserstein
  distance.
\newblock {\em Journal of Nonparametric Statistics}, 0(0):1--23.

\bibitem[Dombry and Zaoui, 2024]{Dombry_Zaoui24}
Dombry, C. and Zaoui, A. (2024).
\newblock Distributional regression: Crps-error bounds for model fitting, model
  selection and convex aggregation.
\newblock In {\em Advances in Neural Information Processing Systems}.

\bibitem[Embrechts and Hofert, 2007]{Embrechts_Hofert13}
Embrechts, P. and Hofert, M. (2007).
\newblock A note on generalized inverses.
\newblock {\em Mathematical Methods of Operations Research}, 77:423--432.

\bibitem[Ćevid et~al., 2022]{Cevid_MichelNBM22}
Ćevid, D., Michel, L., Näf, J., Bühlmann, P., and Meinshausen, N. (2022).
\newblock Distributional random forests: Heterogeneity adjustment and
  multivariate distributional regression.
\newblock {\em Journal of Machine Learning Research}, 23(333):1--79.

\bibitem[Gneiting and Raftery, 2007]{Gneiting_Raftery07}
Gneiting, T. and Raftery, A. (2007).
\newblock Strictly proper scoring rules, prediction, and estimation.
\newblock {\em Journal of the American Statistical Association},
  102(477):359--378.

\bibitem[Gneiting et~al., 2005]{Gneiting05}
Gneiting, T., Raftery, A., A.H., W., and Goldman, T. (2005).
\newblock Calibrated probabilistic forecasting using ensemble model output
  statistics and minimum crps estimation.
\newblock {\em Monthly Weather Review}, 133(5):1098--1118.

\bibitem[Gy{\"o}rfi et~al., 2002]{Gyofri_Kohler_Krzyzak_Walk02}
Gy{\"o}rfi, L., Kohler, M., Krzy{\.z}ak, A., and Walk, H. (2002).
\newblock {\em A distribution-free theory of nonparametric regression}.
\newblock Springer Series in Statistics. Springer-Verlag, New York.

\bibitem[Henzi et~al., 2021]{Henzi_et_al_2021}
Henzi, A., Ziegel, J.~F., and Gneiting, T. (2021).
\newblock {Isotonic Distributional Regression}.
\newblock {\em Journal of the Royal Statistical Society Series B: Statistical
  Methodology}, 83(5):963--993.

\bibitem[Herbei and Wegkamp, 2006]{Herbei_Wegkamp06}
Herbei, R. and Wegkamp, M. (2006).
\newblock Classification with reject option.
\newblock {\em Canad. J. Statist.}, 34(4):709--721.

\bibitem[Massart, 1990]{Massart90}
Massart, P. (1990).
\newblock The tight constant in the dvoretzky-kiefer-wolfowitz inequality.
\newblock {\em Ann. Probab.}, 18(3):1269--1283.

\bibitem[Matheson and Winkler, 1976]{Matheson_Winkler76}
Matheson, J.~E. and Winkler, R.~L. (1976).
\newblock Scoring rules for continuous probability distributions.
\newblock {\em Management Science}, 22(10):1087--1096.

\bibitem[Pic et~al., 2023]{Pic_Dombry_Naveau_Taillardat23}
Pic, R., Dombry, C., Naveau, P., and Taillardat, M. (2023).
\newblock Distributional regression and its evaluation with the crps: Bounds
  and convergence of the minimax risk.
\newblock {\em Journal of the American Statistical Association},
  39(4):1564--1572.

\bibitem[Rasp and Lerch, 2018]{Rasp_Lerch18}
Rasp, S. and Lerch, S. (2018).
\newblock Neural networks for post-processing ensemble weather forecasts.
\newblock {\em Monthly Weather Review}, 146:3885--3900.

\bibitem[Schulz et~al., 2021]{Schulz2021}
Schulz, B., {El Ayari}, M., Lerch, S., and Baran, S. (2021).
\newblock Post-processing numerical weather prediction ensembles for
  probabilistic solar irradiance forecasting.
\newblock {\em Solar Energy}, 220:1016--1031.

\bibitem[Toth, 1989]{ZT89}
Toth, Z. (1989).
\newblock Long-range weather forecasting using an analog approach.
\newblock {\em Journal of Climate}, 2(6):594 -- 607.

\bibitem[Tsybakov, 2008]{Tsybakov08}
Tsybakov, A. (2008).
\newblock {\em Introduction to Nonparametric Estimation}.
\newblock Springer Series in Statistics. Springer New York.

\bibitem[Vovk, 2002]{Vovk02IndepError}
Vovk, V. (2002).
\newblock On-line confidence machines are well-calibrated.
\newblock In {\em Proceedings of the Forty-Third Annual Symposium on
  Foundations of Computer Science}, pages 187--196. CA. IEEE Computer Society,
  Los Alamitos.

\bibitem[Vovk et~al., 1999]{Vovk99IntroCP}
Vovk, V., Gammerman, A., and Saunders, C. (1999).
\newblock Machine-learning applications of algorithmic randomness.
\newblock In {\em Proceedings of the Sixteenth International Conference on
  Machine Learning}, pages 444--453, San Francisco, CA. Morgan Kaufmann.

\bibitem[Vovk et~al., 2005]{Vovk_Gammerman_Shafer05}
Vovk, V., Gammerman, A., and Shafer, G. (2005).
\newblock {\em Algorithmic learning in a random world}.
\newblock Springer, New York.

\bibitem[Zaoui et~al., 2020]{Zaoui2020}
Zaoui, A., Denis, C., and Hebiri, M. (2020).
\newblock Regression with reject option and application to knn.
\newblock In Larochelle, H., Ranzato, M., Hadsell, R., Balcan, M., and Lin, H.,
  editors, {\em Advances in Neural Information Processing Systems}, volume~33,
  pages 20073--20082. Curran Associates, Inc.

\end{thebibliography}

\newpage
\appendix
\section*{Supplementary material}

This section gathers the proof of our results.

\section{Technical lemmas}
In this section we gather several technical results which are used to derive the contributions of this work.
\begin{lemma}
Let $F_X^*$ be the conditional distribution of $Y$ given $X$. Then, we have
\[
\mathbb{E}\left[\crps(F_{X}^{*},Y)|X\right]=\ent(F_X^*).
\]
\label{lem:entropy}
\end{lemma}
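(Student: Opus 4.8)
The plan is to compute the conditional expectation directly from the definition of the CRPS and to recognize the integrand as the variance of a Bernoulli random variable. Working conditionally on $X=x$, where $Y$ has distribution $F_x^*$, I would first write
\[
\mathbb{E}\left[\crps(F_x^*,Y)\mid X=x\right]=\mathbb{E}\left[\int_{\mathbb{R}}\big(F_x^*(u)-\one_{\{Y\leq u\}}\big)^2\,du\,\Big|\,X=x\right].
\]
Since the integrand is nonnegative, Tonelli's theorem licenses the interchange of the conditional expectation and the integral over $u$, yielding $\int_{\mathbb{R}}\mathbb{E}\big[(F_x^*(u)-\one_{\{Y\leq u\}})^2\mid X=x\big]\,du$.

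Next, for each fixed $u$ I would set $p=F_x^*(u)=\mathbb{P}(Y\leq u\mid X=x)$ and observe that, conditionally on $X=x$, the indicator $\one_{\{Y\leq u\}}$ is a Bernoulli random variable with mean $p$. Hence the inner expectation is exactly $\mathbb{E}[(p-\one_{\{Y\leq u\}})^2\mid X=x]=\Var(\one_{\{Y\leq u\}}\mid X=x)=p(1-p)=F_x^*(u)(1-F_x^*(u))$. Integrating over $u$ then gives $\int_{\mathbb{R}}F_x^*(u)(1-F_x^*(u))\,du=\ent(F_x^*)$, which is the claim.

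Alternatively, the statement is an immediate special case of the CRPS decomposition $\overline{\crps}(H,K)=\ent(K)+\Div(H,K)$ recalled in Section~\ref{sec:crps}: taking $H=K=F_x^*$ makes the divergence term $\Div(F_x^*,F_x^*)=0$ vanish, so that $\mathbb{E}[\crps(F_x^*,Y)\mid X=x]=\overline{\crps}(F_x^*,F_x^*)=\ent(F_x^*)$. I would lead with the self-contained Tonelli computation, since it does not rely on the ordering of the lemmas, and mention the decomposition only as a shortcut.

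The only point requiring genuine care is the measure-theoretic bookkeeping: the interchange of expectation and integral, and the finiteness of the resulting integral. The interchange is justified by Tonelli because the integrand is nonnegative, while finiteness of $\ent(F_x^*)=\int_{\mathbb{R}}F_x^*(u)(1-F_x^*(u))\,du$ follows from the finite first-moment assumption on the conditional law (equivalently, from the identity $\ent(F_x^*)=\tfrac12\,\mathbb{E}[\,|Y-Y'|\mid X=x]$ for independent copies $Y,Y'\sim F_x^*$). I do not anticipate any obstacle beyond this routine verification.
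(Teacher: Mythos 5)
Your proof is correct and follows essentially the same route as the paper's: both interchange the conditional expectation with the integral over $u$ (Fubini/Tonelli on the nonnegative integrand) and reduce the inner expectation to $F_x^*(u)\bigl(1-F_x^*(u)\bigr)$, which you obtain by recognizing the Bernoulli variance while the paper expands the square explicitly. Your added remarks on finiteness and the shortcut via the decomposition $\overline{\crps}(H,K)=\ent(K)+\Div(H,K)$ are sound but not needed.
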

\begin{proof}
By the definition of $F^*_X$, we have $F^{*}_X(y)=\mathbb{P}(Y\leq y|X)=\mathbb{E}[\one_{\{Y\leq y\}}|X]$. Using Fubini's Theorem, we get
\begin{eqnarray*}
\mathbb{E}\left[\crps(F_{X}^{*},Y)|X\right]&=&\int_{\mathbb{R}}\mathbb{E}\left[(F_{X}^{*}(u))^2-2\one_{\{Y\leq u\}}F_{X}^{*}(u)+\one_{\{Y\leq u\}}|X\right]du\\
&=& \int_{\mathbb{R}}\left((F_{X}^{*}(u))^2-2\mathbb{E}\left[\one_{\{Y\leq u\}}|X\right]F_{X}^{*}(u)+\mathbb{E}\left[\one_{\{Y\leq u\}}|X\right]\right)du\\
&=&\int_{\mathbb{R}}F_{X}^{*}(u)(1-F_{X}^{*}(u))du=\ent(F^{*}_X).
\end{eqnarray*}
\end{proof}
\begin{lemma}[Dvoretzky-Kiefer-Wolfowitz Inequality~\citep{Massart90}] Let $Z_1,\ldots,Z_N$ be i.i.d. real-valued random variables with distribution function $H_Z$ and let $\hat{H}_Z$ be the empirical distribution function. Then for every $\beta>0$ we have
\[
\mathbb{P}\left(\sup_{z\in \mathbb{R}}\big|\hat{H}_Z(z)-H_{Z}(z)\big|\geq \beta\right)\leq 2 \exp(-2N \beta^2).
\]
\label{lem:DKW}
\end{lemma}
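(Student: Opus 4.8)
The statement is the classical Dvoretzky--Kiefer--Wolfowitz inequality with the optimal constant due to \citet{Massart90}, so the cleanest route is simply to invoke that reference; nevertheless, here is how I would organize a proof. First I would split the two-sided deviation into its two one-sided parts. Setting $D_N^+=\sup_{z\in\mathbb{R}}(\hat H_Z(z)-H_Z(z))$ and $D_N^-=\sup_{z\in\mathbb{R}}(H_Z(z)-\hat H_Z(z))$, the event $\{\sup_z|\hat H_Z(z)-H_Z(z)|\geq\beta\}$ is contained in $\{D_N^+\geq\beta\}\cup\{D_N^-\geq\beta\}$. A union bound then yields the prefactor $2$, so the whole task reduces to proving the one-sided bound $\mathbb{P}(D_N^+\geq\beta)\leq\exp(-2N\beta^2)$ (and the same for $D_N^-$).

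Second, I would eliminate the dependence on the unknown law $H_Z$ by the probability integral transform. If $H_Z$ is continuous, then $U_i:=H_Z(Z_i)$ are i.i.d. uniform on $[0,1]$ and $D_N^+$ has the same distribution as $\sup_{t\in[0,1]}(\hat U_N(t)-t)$, which is distribution-free; the general case with atoms follows by a monotone coupling that can only decrease the deviation. The symmetry $t\leftrightarrow 1-t$ of the uniform empirical process shows that $D_N^-$ obeys the identical tail bound, so it is enough to control a single universal quantity.

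The heart of the argument is the sharp one-sided estimate $\mathbb{P}(\sup_{t}(\hat U_N(t)-t)\geq\beta)\leq\exp(-2N\beta^2)$. For this I would start from the exact Birnbaum--Tingey/Smirnov formula for the law of the one-sided Kolmogorov--Smirnov statistic and bound the resulting combinatorial sum, which is exactly the delicate analysis performed by Massart. The main obstacle is the \emph{optimal} constant: a version with a worse prefactor is elementary, since for fixed $z$ one has $N\hat H_Z(z)\sim\mathrm{Binomial}(N,H_Z(z))$ and Hoeffding's inequality gives $\mathbb{P}(|\hat H_Z(z)-H_Z(z)|\geq\beta)\leq 2\exp(-2N\beta^2)$ pointwise; combining this with a discretization over the at most $N+1$ distinct levels of $\hat H_Z$, or with a bounded-differences (McDiarmid) concentration of $\sup_z|\hat H_Z(z)-H_Z(z)|$ around its mean controlled by symmetrization, produces a bound of the form $C\exp(-2N\beta^2)$. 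Driving $C$ all the way down to $2$ is precisely what requires Massart's refined estimates, and that final step I would cite rather than reproduce.
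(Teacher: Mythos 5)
Your proposal is correct and is essentially the same as the paper's treatment: the paper states Lemma~\ref{lem:DKW} as a classical result quoted directly from \citet{Massart90} without proof, and your argument likewise reduces everything (union bound over the two one-sided deviations, probability integral transform to the uniform case) to Massart's sharp one-sided estimate, which you then cite. The only point worth flagging is that the one-sided bound with constant $1$ holds under the restriction $\exp(-2N\beta^2)\leq 1/2$, but this is harmless here since for smaller $\beta$ the two-sided bound $2\exp(-2N\beta^2)\geq 1$ is trivially true.
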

\begin{lemma}
Let $H$ and $K$ be two distribution functions. Then, the following inequalities hold
\[
\big|\ent(H)-\ent(K)\big|\leq W_{1}(H,K), \enspace \text{and}\enspace \Div(H,K) \leq W_{1}(H,K).
\]
\label{lem:boundedentropy}
\end{lemma}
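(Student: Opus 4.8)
The plan is to prove both inequalities by a pointwise comparison of the integrands, exploiting the fact that distribution functions take values in $[0,1]$, and then integrating. The unifying observation is that both the pointwise divergence integrand and the pointwise entropy difference can be dominated by $|H(u)-K(u)|$, whose integral over $\mathbb{R}$ is exactly $W_1(H,K)$.

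I would treat the divergence bound first, as it is the more immediate of the two. For each $u \in \mathbb{R}$ we have $H(u), K(u) \in [0,1]$, so $|H(u)-K(u)| \leq 1$, and therefore $(H(u)-K(u))^2 \leq |H(u)-K(u)|$. Integrating this pointwise inequality over $\mathbb{R}$ gives directly
\[
\Div(H,K) = \int_{\mathbb{R}} \big(H(u)-K(u)\big)^2 \, du \leq \int_{\mathbb{R}} |H(u)-K(u)| \, du = W_1(H,K).
\]

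For the entropy bound, the key step is an algebraic factorization of the integrand. Setting $a = H(u)$ and $b = K(u)$, one checks that $a(1-a) - b(1-b) = (a-b)(1-a-b)$, so that
\[
\ent(H) - \ent(K) = \int_{\mathbb{R}} \big(H(u)-K(u)\big)\big(1 - H(u) - K(u)\big) \, du.
\]
Since $H(u)+K(u) \in [0,2]$, we have $|1 - H(u) - K(u)| \leq 1$. Taking absolute values inside the integral and applying the triangle inequality then yields $|\ent(H)-\ent(K)| \leq \int_{\mathbb{R}} |H(u)-K(u)| \, du = W_1(H,K)$.

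I do not anticipate any genuine obstacle: both statements reduce to the elementary facts that $t^2 \leq t$ for $t \in [0,1]$ and the factorization identity for the entropy difference. The only point deserving mild care is measurability and integrability of the integrands, but since each is dominated pointwise by $|H-K|$, whose integral $W_1(H,K)$ is finite on $\mathcal{P}_1(\mathbb{R})$, every manipulation is fully justified.
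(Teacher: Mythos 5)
Your proof is correct and follows essentially the same route as the paper: the entropy bound via the factorization $a(1-a)-b(1-b)=(a-b)(1-a-b)$ together with $|1-H(u)-K(u)|\leq 1$, and the divergence bound via $(H(u)-K(u))^2\leq |H(u)-K(u)|$ (which the paper simply calls ``obvious''). No gaps.
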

\begin{proof}
By the definition of entropy $\ent$, we have 
\begin{eqnarray*}
\big|\ent(H)-\ent(K)\big| &\leq & \int_{\mathbb{R}}\big|H(u)(1-H(u))-K(u)(1-K(u)\big|du \\
&=& \int_{\mathbb{R}}\big|H(u)-K(u)+K^2(u)-H^2(u)\big|du\\
&\leq &\int_{\mathbb{R}}\big|H(u)-K(u)\big|\big|1-K(u)-H(u)\big|du\\
&\leq &\int_{\mathbb{R}}\big|H(u)-K(u)\big|du=W_{1}(H,K). \\
\end{eqnarray*}
The second inequality is obvious.
\end{proof}
\begin{lemma}[Lemma~1 in~\citep{denis_hebiri_2020}]
\label{lem:DH20}
Let $X$ be a real random variable, $(X_n)_{n\geq 1}$ be a sequence of real random variables and $t_0 \in \mathbb{R}$.
Assume that there exist $C_1 > 0 $ and $\gamma_0>0$ such that
$$
\mathbb{P}_X \left( | X - t_0| \leq \delta  \right) \leq C_1 \delta^{\gamma_0} , \quad \forall \delta > 0\enspace,
$$
and a sequence of positive numbers $a_n$ tends towards infinity, $C_2$, $C_3$  some positive constants such that
$$
\mathbb{P}_{X_{n}} \left( | X_n - X| \geq \delta|X  \right) \leq C_2 \exp\left(-C_{3}a_{n}\delta^2\right) , \quad \forall \delta > 0,  \quad \forall n\in \mathbb{N}.
$$
Then, there exists $C>0$ depending only on $C_1,C_2$ and $C_3$, such that
\begin{equation*}
 |\mathbb{E}\left[\one_{X_n\geq t_0}-\one_{X\geq t_0}\right]|
 \leq C a_{n}^{-\gamma_0/2}.
\end{equation*}
\end{lemma}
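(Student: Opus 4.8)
The plan is to reduce the signed quantity to the probability that $X_n$ and $X$ fall on opposite sides of the threshold $t_0$, and then to control this probability by combining the deviation bound on $X_n$ with the margin (small-ball) condition on $X$. First I would write
\[
\big|\mathbb{E}[\one_{X_n\geq t_0}-\one_{X\geq t_0}]\big|\leq \mathbb{E}\big[\big|\one_{X_n\geq t_0}-\one_{X\geq t_0}\big|\big]=\mathbb{P}\big(\{X_n\geq t_0,\,X<t_0\}\cup\{X_n<t_0,\,X\geq t_0\}\big),
\]
since the integrand takes values in $\{-1,0,1\}$ and is nonzero precisely when the two variables straddle $t_0$.

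The next, crucial observation is purely geometric: on either straddling event one has $|X-t_0|\leq |X_n-X|$, because $t_0$ then lies between $X$ and $X_n$. Conditioning on $X$ (which makes $\one_{X\geq t_0}$ deterministic and $\delta:=|X-t_0|$ measurable with respect to the conditioning), the event that $X_n$ lands on the opposite side of $t_0$ is therefore contained in $\{|X_n-X|\geq |X-t_0|\}$. Applying the deviation hypothesis with $\delta=|X-t_0|$ yields the conditional bound
\[
\mathbb{P}\big(\text{straddle}\mid X\big)\leq \mathbb{P}\big(|X_n-X|\geq |X-t_0|\,\big|\,X\big)\leq C_2\exp\!\big(-C_3 a_n |X-t_0|^2\big),
\]
valid almost surely, the margin condition forcing $\mathbb{P}(X=t_0)=0$ so that $|X-t_0|>0$ a.s. Taking expectations reduces the problem to bounding $C_2\,\mathbb{E}\big[\exp(-C_3 a_n |X-t_0|^2)\big]$.

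The hard part will be showing that this expectation is of order $a_n^{-\gamma_0/2}$ using only the margin assumption $\mathbb{P}(|X-t_0|\leq \delta)\leq C_1\delta^{\gamma_0}$; this is the one step requiring genuine computation. I would handle it by the layer-cake formula: writing $Z=|X-t_0|$ and noting $g(z)=\exp(-C_3 a_n z^2)\in(0,1]$ is decreasing,
\[
\mathbb{E}[g(Z)]=\int_0^1 \mathbb{P}\big(Z< \sqrt{-\ln s/(C_3 a_n)}\,\big)\,ds\leq C_1(C_3 a_n)^{-\gamma_0/2}\int_0^1(-\ln s)^{\gamma_0/2}\,ds.
\]
The remaining integral is a finite constant depending only on $\gamma_0$, equal to the Gamma integral $\int_0^\infty t^{\gamma_0/2}e^{-t}\,dt$ after the substitution $t=-\ln s$. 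Collecting the constants produces the claimed bound with $C$ depending only on $C_1,C_2,C_3$ (and $\gamma_0$). I expect the conditioning plus geometric straddling argument to be the conceptual heart of the proof, while the integration of the Gaussian-type factor against the margin condition, where one must be careful with the tail integral and the change of variables, is the technically delicate piece.
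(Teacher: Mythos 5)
Your argument is correct and complete. Note first that the paper does not prove this lemma at all: it is imported verbatim as Lemma~1 of \citet{denis_hebiri_2020}, so there is no in-paper proof to compare against. Your proof is a valid self-contained derivation: the reduction of $|\mathbb{E}[\one_{X_n\geq t_0}-\one_{X\geq t_0}]|$ to the straddling probability, the geometric inclusion of the straddle event in $\{|X_n-X|\geq |X-t_0|\}$, and the conditioning step (with the observation that the margin condition gives $\mathbb{P}(X=t_0)=0$, so the deviation bound may be applied with the $X$-measurable radius $\delta=|X-t_0|>0$) are all exactly the standard route. Where you differ from the usual write-up is the last step: the classical argument splits on $\{|X-t_0|\leq\delta\}$ versus its complement and then either accepts a logarithmic loss from optimizing $\delta$ or runs a dyadic peeling over annuli $\{2^{j}\delta<|X-t_0|\leq 2^{j+1}\delta\}$; your layer-cake computation $\mathbb{E}[e^{-C_3a_nZ^2}]=\int_0^1\mathbb{P}\bigl(Z<\sqrt{-\ln s/(C_3a_n)}\bigr)\,ds\leq C_1(C_3a_n)^{-\gamma_0/2}\Gamma(\gamma_0/2+1)$ is a clean continuous analogue of that peeling and delivers the sharp rate $a_n^{-\gamma_0/2}$ without the log factor. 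The only caveat, which you already flag, is that the resulting constant necessarily depends on $\gamma_0$ as well as on $C_1,C_2,C_3$; this is an imprecision in the lemma's statement rather than in your proof.
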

\begin{lemma}
\label{lem:expectedCrps}
Let $H$ and $K$ be two distribution functions with a finite absolute moment. Then,
\[
\overline{\crps}(H,K)=\mathbb{E}_{Y\sim K}\left[\crps(H,Y)\right]=\ent(K)+\int_{\mathbb{R}}(H(u)-K(u))^2 du
\]
\end{lemma}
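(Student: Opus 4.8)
The plan is to compute the expectation directly from the definition of the CRPS, mirroring the proof of Lemma~\ref{lem:entropy}; the only novelty is that the forecast $H$ need not coincide with the data-generating distribution $K$, which produces the extra divergence term. First I would expand the integrand of $\crps(H,y)=\int_{\mathbb{R}}(H(u)-\one_{\{y\leq u\}})^2\,du$, using $\one_{\{y\leq u\}}^2=\one_{\{y\leq u\}}$, to obtain
\[
\big(H(u)-\one_{\{y\leq u\}}\big)^2 = H(u)^2 - 2H(u)\one_{\{y\leq u\}} + \one_{\{y\leq u\}}.
\]

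Next, since the integrand is nonnegative, Tonelli's theorem permits interchanging $\mathbb{E}_{Y\sim K}$ with $\int_{\mathbb{R}}du$; together with $\mathbb{E}_{Y\sim K}[\one_{\{Y\leq u\}}]=K(u)$ this gives $\overline{\crps}(H,K)=\int_{\mathbb{R}}\big(H(u)^2-2H(u)K(u)+K(u)\big)\,du$. I would then isolate the two target terms by adding and subtracting $K(u)^2$, writing
\[
H(u)^2-2H(u)K(u)+K(u) = \big(H(u)-K(u)\big)^2 + K(u)\big(1-K(u)\big),
\]
so that integrating term by term recovers exactly $\Div(H,K)+\ent(K)$, which is the claimed identity.

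The only step requiring genuine care is the finiteness of the integral (equivalently, the applicability of the Tonelli interchange and the well-definedness of $\overline{\crps}(H,K)$). This is where the hypothesis that $H$ and $K$ have finite absolute moments enters: it controls the tails of the integrand as $u\to\pm\infty$ (where $H(u),K(u)\to 1$ or $\to 0$ in step with the indicator) and guarantees $\overline{\crps}(H,K)<\infty$. Beyond this integrability check, the argument is a routine algebraic rearrangement and presents no real obstacle.
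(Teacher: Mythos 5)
Your proposal is correct and follows essentially the same route as the paper's proof: expand the square, interchange expectation and integration (the paper invokes Fubini, you more precisely invoke Tonelli on the nonnegative integrand), substitute $\mathbb{E}_{Y\sim K}[\one_{\{Y\leq u\}}]=K(u)$, and complete the square via $a^2-2ab+b=(a-b)^2+b(1-b)$. Your added remark on why the finite-absolute-moment hypothesis guarantees integrability is a welcome refinement the paper leaves implicit, but it does not change the argument.
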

\begin{proof}
Recall that for $u\in \mathbb{R}$, $K(u)=\mathbb{E}_{Y\sim K}\left[\one_{Y\leq u}\right]$. We use the definition of the $\crps$ and apply Fubini's theorem to obtain
\begin{eqnarray*}
\overline{\crps}(H,K)&=&\mathbb{E}_{Y\sim K}\left[\crps(H,Y)\right]\\
 &=& \int_{\mathbb{R}}\mathbb{E}_{Y\sim K}\left[(H(u)-\one_{Y\leq u})^2\right]du\\
 &=& \int_{\mathbb{R}}\left(H(u)^2-2H(u)\mathbb{E}_{Y\sim K}\left[\one_{Y\leq u}\right]+\mathbb{E}_{Y\sim K}\left[\one_{Y\leq u}\right]\right)du\\
  &=& \int_{\mathbb{R}}\left(H(u)^2-2H(u)K(u)+K(u)\right)du.
\end{eqnarray*}
We use the fact that $a^2-2ab+b=(a-b)^2+b(1-b)$, with $a,b \in \mathbb{R}$, in the last equation to obtain the result.
\end{proof}
\begin{lemma}
\label{lem:expliciteformulaENT}
We consider local average estimators that take the form
\[
\hat{F}_{n,X}(Y)=\sum_{i=1}^{n}w_{ni}(X)\one_{Y_i\leq y}
\]
where $\left\{w_{ni}(X)\right\}$ are probability weights satisfying the conditions $\sum_{i=1}^{n}w_{ni}(X)=1$ and $w_{ni}(X)\geq 0$ for $i=1,\ldots,n$. Then, 
\[
\ent(\hat{F}_{n,X})=\sum_{i=1}^{n}\sum_{j=1}^{n} w_{ni}(X)w_{nj}(X)(Y_j-Y_i)\one_{Y_i< Y_j}.
\]
\end{lemma}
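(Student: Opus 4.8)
The plan is to compute the defining integral $\ent(\hat F_{n,X}) = \int_{\mathbb{R}} \hat F_{n,X}(u)\big(1 - \hat F_{n,X}(u)\big)\, du$ directly, by expanding the integrand into a double sum over index pairs and evaluating the resulting elementary integrals.

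First I would abbreviate $w_i := w_{ni}(X)$ and write $H(u) := \hat F_{n,X}(u) = \sum_{i=1}^n w_i \one_{\{Y_i \le u\}}$. Using the normalization $\sum_{i=1}^n w_i = 1$, I would rewrite the complementary term as $1 - H(u) = \sum_{j=1}^n w_j \one_{\{Y_j > u\}}$, so that the integrand factors into a product of two finite sums:
\[
H(u)\big(1 - H(u)\big) = \sum_{i=1}^n \sum_{j=1}^n w_i w_j \one_{\{Y_i \le u\}} \one_{\{Y_j > u\}}
= \sum_{i=1}^n \sum_{j=1}^n w_i w_j \one_{\{Y_i \le u < Y_j\}}.
\]

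Next, since the sum is finite, I would interchange summation and integration by linearity of the integral (no Fubini argument being necessary) to obtain $\ent(H) = \sum_{i,j} w_i w_j \int_{\mathbb{R}} \one_{\{Y_i \le u < Y_j\}}\, du$. The key step is evaluating the inner integral: the set $\{u : Y_i \le u < Y_j\}$ is empty unless $Y_i < Y_j$, in which case it is an interval of Lebesgue length $Y_j - Y_i$. Hence $\int_{\mathbb{R}} \one_{\{Y_i \le u < Y_j\}}\, du = (Y_j - Y_i)\one_{\{Y_i < Y_j\}}$, and substituting this back into the double sum yields precisely the announced formula.

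There is no genuine obstacle in this argument; the only point requiring a moment of care is the treatment of boundary and tie configurations. When $Y_i = Y_j$ the set $\{u : Y_i \le u < Y_j\}$ has Lebesgue measure zero and contributes nothing, which is consistent with the strict inequality $\one_{\{Y_i < Y_j\}}$ appearing in the statement; similarly, the choice of $\le$ versus $<$ at the endpoints is immaterial, since it only modifies the integrand on a null set. I would simply record this observation to confirm that the finite-sum manipulation is valid without further qualification.
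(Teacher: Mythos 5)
Your argument is correct and coincides with the paper's own proof: both rewrite $1-\hat F_{n,X}(u)$ as $\sum_j w_{nj}(X)\one_{\{Y_j>u\}}$ using the normalization of the weights, expand the integrand into a double sum, and evaluate $\int_{\mathbb{R}}\one_{\{Y_i\le u<Y_j\}}\,du=(Y_j-Y_i)\one_{\{Y_i<Y_j\}}$. Your explicit remark about ties and endpoint conventions is a minor polish the paper leaves implicit, but the route is the same.
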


\begin{proof}
The definition of the weights $\left\{w_{ni}(X)\right\}$ ensures that
\[
1-\hat{F}_{n,X}(Y)=\sum_{j=1}^{n}w_{nj}(X)\one_{Y_j>y}.
\]
From this, we express $\ent(\hat{F}_{n,X})$ as
\begin{eqnarray*}
\ent(\hat{F}_{n,X})= \sum_{i=1}^{n}\sum_{j=1}^{n} w_{ni}(X)w_{nj}(X)\int_{\mathbb{R}}\one_{Y_i\leq y}\one_{Y_j>y}dy.
\end{eqnarray*}
The integration over \( y \) can be performed by analyzing the indicator functions $\one_{Y_i \leq y}$ and  $\one_{Y_j > y}$. If $Y_i \leq y$ and $Y_j > y$, this implies that $y \in (Y_i, Y_j)$, for $i\neq j$ 
and that the integral over $y$ is non-zero only within this interval. This leads us to compute
\[
\int_{\mathbb{R}} \one_{Y_i \leq y} \cdot \one_{Y_j > y} \, dy = \int_{Y_i}^{Y_j} 1 \, dy = Y_j - Y_i, \quad \text{for } Y_i < Y_j.
\]
Thus, the result follows.
\end{proof}
\section{Proof of Section~\ref{sec:generalframework}}
\begin{proof}[Proof of Proposition~\ref{prop:optimalpredictor}]
    By definition of $\mathcal{R}_{\lambda}(\Gamma_F)$ and $\crps$, we have
    \begin{eqnarray*}
        \mathcal{R}_{\lambda}(\Gamma_F) &=& \mathbb{E}\left[\int_{\mathbb{R}}(F_{X}(u)-\one_{\{Y\leq u\}})^2du \one_{\{\Gamma_F(X)\neq \re\}}\right]+\lambda r(\Gamma_F)\\
        &=& \mathbb{E}\left[\Div(F_X,F^{*}_{X}) \one_{\{\Gamma_F(X)\neq \re\}}\right]+\mathbb{E}\left[\crps(F^{*}_{X},Y)\one_{\{\Gamma_F(X)\neq \re\}}\right]\\
        && +2 \mathbb{E}\left[\int_{\mathbb{R}}(F_{X}(u)-F^{*}_X(u))(F^{*}_X(u)-\one_{\{Y\leq u\}}) du \one_{\{\Gamma_F(X)\neq \re\}}\right]+\lambda r(\Gamma_F).
    \end{eqnarray*}
Using Fubini's theorem we get
    \begin{eqnarray*}
         &&\mathbb{E}\left[\int_{\mathbb{R}}(F_{X}(u)-F^{*}_X(u))(F^{*}_X(u)-\one_{\{Y\leq u\}}) du \one_{\{\Gamma_F(X)\neq \re}\}\right]\\
         &=& \int_{\mathbb{R}}\mathbb{E}\left[(F_{X}(u)-F^{*}_X(u))(F^{*}_X(u)-\one_{\{Y\leq u\}}) \one_{\{\Gamma_F(X)\neq \re\}} \right]du \\
         &=& \int_{\mathbb{R}}\mathbb{E}\left[\mathbb{E}\left[(F_{X}(u)-F^{*}_X(u))(F^{*}_X(u)-\one_{\{Y\leq u\}})  \one_{\{\Gamma_F(X)\neq \re\}}\big|X \right]\right]du\\
         &=& \int_{\mathbb{R}}\mathbb{E}\left[(F_{X}(u)-F^{*}_X(u))(F^{*}_X(u)-\mathbb{E}\left[\one_{\{Y\leq u\}}  \big|X \right])\one_{\{\Gamma_F(X)\neq \re\}}\right]du =0.
    \end{eqnarray*}
    Then, the risk $\mathcal{R}_{\lambda}$ can be written as thanks to Lemma~\ref{lem:entropy}
   \begin{eqnarray}
        \mathcal{R}_{\lambda}(\Gamma_F) &=& \mathbb{E}\left[\int_{\mathbb{R}}(F_{X}(u)-\one_{\{Y\leq u\}})^2du \one_{\{\Gamma_F(X)\neq \re\}}\right]+\lambda r(\Gamma_F)\nonumber\\
        &=& \mathbb{E}\left[\left(\Div(F_X,F^{*}_{X}) +\crps(F^{*}_{X},Y)-\lambda\right)\one_{\{\Gamma_F(X)\neq \re\}}\right] +\lambda \nonumber\\
        &=& \mathbb{E}\left[\mathbb{E}\left[\left(\Div(F_X,F^{*}_{X}) +\crps(F^{*}_{X},Y)-\lambda\right)\one_{\{\Gamma_F(X)\neq \re\}}|X\right]\right] +\lambda \nonumber\\
         &=& \mathbb{E}\left[\left(\Div(F_X,F^{*}_{X}) +\ent(F^{*}_{X})-\lambda\right)\one_{\{\Gamma_F(X)\neq \re\}}\right] +\lambda .
         \label{eq: riskGamma}
    \end{eqnarray}
  At event $\{\Gamma_F(X)\neq \re\}$, it is evident that the minimum of the mapping $F \mapsto \Div(F_X,F^{*}_{X}) +\ent(F^{*}_{X})-\lambda $ occurs at $F(\cdot)=F^{*}_X(\cdot)$. Therefore, we consider the minimization over all predictors with reject option  $\Gamma$ of 
  \[
  \Gamma \mapsto \mathbb{E}\left[\left(\ent(F^*_X)-\lambda\right)\one_{\{\Gamma_F(X)\neq \re\}}\right]+\lambda.
  \]
  We can conclude that $\{\Gamma_F(X)\neq \re\}=\{\ent(F^*_X)\leq \lambda\}$.
\end{proof}
\begin{proof}[Proof of Proposition~\ref{prop:propertiesoptimalpredictor}]
Let $0<\lambda \leq \lambda'$. We remark that 
\[
\{\Gamma_{\lambda}^{*}(X)\neq \re\}=\{\ent(F^*_X)\leq \lambda\}\subset
\{\Gamma_{\lambda'}^{*}(X)\neq \re\}=\{\ent(F^*_X)\leq \lambda'\}.
\]
Then, we can deduce that $r(\Gamma_{\lambda'}^{*})\leq r(\Gamma_{\lambda}^{*})$. To prove the second inequality, we denote by $r_\lambda=\mathbb{P}(\Gamma_F(X)\neq \re)$. According to the definition of the error rate, and Lemma~\ref{lem:entropy} we obtain 
\begin{eqnarray*}
\err(\Gamma^*_\lambda)-\err(\Gamma^*_{\lambda'})&=&\frac{1}{r_\lambda}\mathbb{E}\left[\crps(F^*_X,Y)\one_{\{\ent(F^*_X)\leq \lambda\}}\right]-\frac{1}{r_{\lambda'}}\mathbb{E}\left[\crps(F^*_X,Y)\one_{\{\ent(F^*_X)\leq \lambda'\}}\right]\\
&=&\frac{1}{r_\lambda}\mathbb{E}\left[\mathbb{E}\left[\crps(F^*_X,Y)\one_{\{\ent(F^*_X)\leq \lambda\}}|X\right]\right]-\frac{1}{r_{\lambda'}}\mathbb{E}\left[\mathbb{E}\left[\crps(F^*_X,Y)\one_{\{\ent(F^*_X)\leq \lambda'\}}|X\right]\right]\\
&=&\left(\frac{1}{r_\lambda}-\frac{1}{r_{\lambda'}}\right)\mathbb{E}\left[\ent(F^*_X)\one_{\{\ent(F^*_X)\leq \lambda\}}\right]\\
&& -\frac{1}{r_{\lambda'}}\mathbb{E}\left[\ent(F^*_X)\left(\one_{\{\ent(F^*_X)\leq \lambda'\}}-\one_{\{\ent(F^*_X)\leq \lambda\}}\right)\right].\\
\end{eqnarray*}
For the first term on the right of the previous equation, it is clear that
\begin{eqnarray*}
\left(\frac{1}{r_\lambda}-\frac{1}{r_{\lambda'}}\right)\mathbb{E}\left[\ent(F^*_X)\one_{\{\ent(F^*_X)\leq \lambda\}}\right]\leq \lambda\left(\frac{1}{r_\lambda}-\frac{1}{r_{\lambda'}}\right)\mathbb{E}\left[\one_{\{\ent(F^*_X)\leq \lambda\}}\right]= \lambda\left(1-\frac{r_\lambda}{r_{\lambda'}}\right).
\end{eqnarray*}
For the second term, we have
\begin{eqnarray*}
\frac{1}{r_{\lambda'}}\mathbb{E}\left[\ent(F^*_X)\left(\one_{\{\ent(F^*_X)\leq \lambda'\}}-\one_{\{\ent(F^*_X)\leq \lambda\}}\right)\right]&=& \frac{1}{r_{\lambda'}}\mathbb{E}\left[\ent(F^*_X)\one_{\{\lambda\leq \ent(F^*_X)\leq \lambda'\}}\right]\\
&\geq & \frac{\lambda}{r_{\lambda'}}\mathbb{P}\left(\lambda\leq \ent(F^*_X)\leq \lambda'\right)\\
&=&  \lambda\left(1-\frac{r_\lambda}{r_{\lambda'}}\right).
\end{eqnarray*}
Then, we can conclude that 
\[
\err(\Gamma^*_\lambda)-\err(\Gamma^*_{\lambda'})\leq 0.
\]
\end{proof}
\begin{proof}[Proof of Proposition~\ref{prop:optimalrule}]
First, note that for any $\varepsilon \in (0,1)$, if we define $\lambda_{\varepsilon}=G^{-1}_{\ent}(1-\varepsilon)$ where $G^{-1}_{\ent}$ is the generalised inverse of the cumulative distribution $G_{\ent}$, then the optimal predictor $\Gamma_{\lambda}$ given in Proposition~\ref{prop:optimalpredictor} with $\lambda=\lambda_{\varepsilon}$ satisfies the following, thanks to the properties of the quantile functions~\citep{Embrechts_Hofert13}:
\[
r(\Gamma^*_{\lambda_{\varepsilon}})= \mathbb{P}\left(\ent(F^{*}_{X}) > \lambda_{\varepsilon}\right)= \mathbb{P}\left(G_{\ent}(\ent(F^{*}_{X}))\geq 1-\varepsilon\right)=\varepsilon,
\]
and for $\varepsilon'\leq \varepsilon$, it follows that $\lambda_{\varepsilon}\leq \lambda_{\varepsilon'}$. Thus, by Proposition~\ref{prop:propertiesoptimalpredictor}, we have
\[
\err(\Gamma_{\lambda_{\varepsilon}}^*) \leq \err(\Gamma_{\lambda_{\varepsilon'}}^*).
\]
We now aim to prove that any predictor $\Gamma_F$ with $r(\Gamma_F)=\varepsilon'$ and $\varepsilon'\leq \varepsilon$ satisfies
$\err(\Gamma_{\lambda_\varepsilon}^*) \leq \err(\Gamma_{F})$. To do this, consider $\Gamma_{\lambda_{\varepsilon'}}^*$ with $\lambda_{\varepsilon'}=G^{-1}_{\ent}(1-\varepsilon')$. Thanks to the optimality of $\Gamma_{\lambda_{\varepsilon'}}^*$ (see Proposition~\ref{prop:optimalpredictor}), we get
\[
\err(\Gamma_F)-\err(\Gamma_{\lambda_{\varepsilon'}}^*)=\frac{1}{1-\varepsilon'}\left(\mathcal{R}_{\lambda_{\varepsilon'}}(\Gamma_F)-\mathcal{R}_{\lambda_{\varepsilon'}}(\Gamma_{\lambda_{\varepsilon'}}^*)\right)\geq 0.
\]
Therefore, we can deduce that 
\[
\err(\Gamma_{\lambda_{\varepsilon}}^*) \leq \err(\Gamma_{\lambda_{\varepsilon'}}^*)\leq \err(\Gamma_F),
\]
and we conclude the result.

\end{proof}

\begin{proof}[Proof of Proposition~\ref{prop:excessrisk}]
The risk $\mathcal{R}_{\lambda}(\Gamma_F)$ can be expressed as 
\[
 \mathcal{R}_{\lambda}(\Gamma_F)=  \mathbb{E}_X\left[\Div(F_X,F^{*}_{X})\one_{\{\Gamma_F(X)\neq \re\}}\right]+\mathbb{E}_X\left[(\ent(F_{X}^{*})-\lambda_{\varepsilon})\one_{\{\Gamma_F(X)\neq \re\}}\right]+\lambda_{\varepsilon}.
\]
Then we can deduce that 
\[
\mathcal{E}_{\lambda_\varepsilon}(\Gamma_F)=  \mathbb{E}_X\left[\Div(F_X,F^{*}_{X})\one_{\{\Gamma_F(X)\neq \re\}}\right]+\mathbb{E}_X\left[(\ent(F^*_X)-\lambda_{\varepsilon})\left(\one_{\{\Gamma_F(X)\neq \re\}}-\one_{\{\Gamma_{\varepsilon}^{*}(X)\neq \re\}}\right)\right].
\]
We have the equality between events $\{\Gamma_{\varepsilon}^{*}(X)\neq \re\}=\{\ent(F_{X}^{*})-\lambda_{\varepsilon}\leq 0\}$. Therefore we use the fact that $\one_{\{\Gamma_F(X)\neq \re\}}-\one_{\{\Gamma_{\varepsilon}^{*}(X)\neq \re\}}=\sgn(\ent(F_{X}^{*})-\lambda_{\varepsilon})$ to conclude the result.
\end{proof}
\section{Proof of Section~\ref{sec:statisticalguarantees}}
We begin by introducing a randomized pseudo-oracle with a precise rejection rate equal to $\varepsilon$, which is given as follows:

\[
   \tilde{\Gamma}_{\varepsilon}(X,\zeta)=\begin{cases}
  \hat{F}_{n,X}  & \text{if} \;\;\widehat{\ent}_{\zeta}(F^{*}_{X}) \leq \tilde{\lambda}_\varepsilon\\
  \re     & \text{otherwise} \enspace 
  \end{cases}
  =\begin{cases}
  \hat{F}_{n,X}  & \text{if} \;\; G_{\widehat{\ent}_{\zeta}}(\widehat{\ent}_{\zeta}(F^{*}_{X}) )\leq 1-\varepsilon,\\
  \re     & \text{otherwise} \enspace .
  \end{cases}
\]
where $G_{\widehat{\ent}_{\zeta}}(\cdot )=\mathbb{P}_{X,\zeta}\left(\widehat{\ent}_{\zeta}(F^{*}_{X})\leq \cdot |\mathcal{D}_n\right)$ and $\tilde{\lambda}_\varepsilon=G^{-1}_{\widehat{\ent}_{\zeta}}(1-\varepsilon)$ . Before proving the results of Section~\ref{sec:statisticalguarantees}, we provide the following result.
\begin{proposition}
Let 
\[
I_{\varepsilon}=\mathbb{E}\left[\big|\one_{\{\hat{\Gamma}_{\varepsilon}(X)\neq \re\}}-\one_{\{\tilde{\Gamma}_{\varepsilon}(X)\neq \re\}}\big|\right].
\]
Then, there exists a constant $C>0$ such that
\[
I_{\varepsilon}\leq C N^{-1/2}.
\]
\label{prop:Ivarepsilon}
\end{proposition}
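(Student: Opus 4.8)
The plan is to condition on the labeled sample $\mathcal{D}_n$ and combine the Dvoretzky--Kiefer--Wolfowitz inequality (Lemma~\ref{lem:DKW}) with the probability integral transform. First I would note that, by construction, $\hat{\Gamma}_{\varepsilon}$ and $\tilde{\Gamma}_{\varepsilon}$ use exactly the same randomized score $Z:=\widehat{\ent}_{\zeta}(F^{*}_X)=\ent(\hat{F}_{n,X})+\zeta$ for the fresh test pair $(X,\zeta)$; they differ only in whether $Z$ is compared to $1-\varepsilon$ through the empirical distribution $\hat{G}_{\widehat{\ent}_{\zeta}}$ (built from $\mathcal{D}_N$) or through the conditional distribution $G_{\widehat{\ent}_{\zeta}}$. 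Writing $\Delta:=\sup_{z}\big|\hat{G}_{\widehat{\ent}_{\zeta}}(z)-G_{\widehat{\ent}_{\zeta}}(z)\big|$, a short case analysis on the definitions of the two rules shows that the indicators $\one_{\{\hat{\Gamma}_{\varepsilon}(X)\neq\re\}}$ and $\one_{\{\tilde{\Gamma}_{\varepsilon}(X)\neq\re\}}$ can disagree only when $G_{\widehat{\ent}_{\zeta}}(Z)$ lies within distance $\Delta$ of $1-\varepsilon$. Hence
\[
I_{\varepsilon}\leq \mathbb{E}\left[\one_{\{|G_{\widehat{\ent}_{\zeta}}(Z)-(1-\varepsilon)|\leq \Delta\}}\right].
\]

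The key step is to bound the probability of this event. Conditionally on $\mathcal{D}_n$, the score $Z$ has distribution function $G_{\widehat{\ent}_{\zeta}}$, which is continuous (indeed Lipschitz) because convolving $\ent(\hat{F}_{n,X})$ with the uniform perturbation $\zeta$ removes all atoms. By the probability integral transform, $G_{\widehat{\ent}_{\zeta}}(Z)$ is then uniform on $[0,1]$ given $\mathcal{D}_n$. Freezing $\mathcal{D}_n$ and the unlabeled sample $\mathcal{D}_N$ (which together determine $\Delta$), and using that the test pair $(X,\zeta)$ is independent of $\mathcal{D}_N$, the uniform law bounds an interval of length $2\Delta$ by its own length:
\[
\mathbb{P}\big(|G_{\widehat{\ent}_{\zeta}}(Z)-(1-\varepsilon)|\leq \Delta \,\big|\, \mathcal{D}_n,\mathcal{D}_N\big)\leq 2\Delta .
\]
Taking expectations yields $I_{\varepsilon}\leq 2\,\mathbb{E}[\Delta]$.

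Finally I would control $\mathbb{E}[\Delta]$ by DKW. Conditionally on $\mathcal{D}_n$, the variables $\widehat{\ent}_{\zeta_i}(F^{*}_{X_i})$ for $i=n+1,\dots,n+N$ are i.i.d.\ with distribution function $G_{\widehat{\ent}_{\zeta}}$, so Lemma~\ref{lem:DKW} gives $\mathbb{P}(\Delta\geq \beta\mid\mathcal{D}_n)\leq 2\exp(-2N\beta^2)$ for all $\beta>0$. Integrating the tail,
\[
\mathbb{E}[\Delta\mid\mathcal{D}_n]=\int_0^\infty \mathbb{P}(\Delta\geq\beta\mid\mathcal{D}_n)\,d\beta\leq \int_0^\infty 2\exp(-2N\beta^2)\,d\beta=\sqrt{\pi/(2N)},
\]
and integrating over $\mathcal{D}_n$ gives $\mathbb{E}[\Delta]\leq C' N^{-1/2}$ for an absolute constant $C'$. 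Combining with $I_{\varepsilon}\leq 2\,\mathbb{E}[\Delta]$ yields $I_{\varepsilon}\leq C N^{-1/2}$ with $C=2C'$. The bound is distribution-free because neither the integral transform nor DKW involves the law of $(X,Y)$.

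The main obstacle I anticipate is the careful bookkeeping of the conditioning: $\Delta$ depends on both $\mathcal{D}_n$ and $\mathcal{D}_N$, whereas the uniformity of $G_{\widehat{\ent}_{\zeta}}(Z)$ holds only given $\mathcal{D}_n$. One must therefore integrate out the independent test point $(X,\zeta)$ \emph{before} invoking DKW on $\mathcal{D}_N$, and rigorously justify the continuity of $G_{\widehat{\ent}_{\zeta}}$ that makes the probability integral transform exact; this is precisely the role of the uniform randomization $\zeta$.
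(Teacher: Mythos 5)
Your proof is correct, and it rests on the same two pillars as the paper's: the probability integral transform (conditionally on $\mathcal{D}_n$, the randomized score has the continuous conditional cdf $G_{\widehat{\ent}_{\zeta}}$, so $G_{\widehat{\ent}_{\zeta}}(Z)\sim\mathcal{U}([0,1])$) and the DKW inequality applied conditionally on $\mathcal{D}_n$ to the i.i.d.\ scores built from $\mathcal{D}_N$. Where you diverge is in how the two are combined. The paper introduces a deterministic free parameter $\beta$, splits the disagreement event into $\{|G_{\widehat{\ent}_{\zeta}}(Z)-(1-\varepsilon)|\leq\beta\}$ (probability $2\beta$ by uniformity) and $\{\sup_z|\hat{G}_{\widehat{\ent}_{\zeta}}(z)-G_{\widehat{\ent}_{\zeta}}(z)|\geq\beta\}$ (probability at most $2e^{-2N\beta^2}$ by DKW), and then invokes Lemma~\ref{lem:DH20} to balance the two terms and extract the rate $N^{-1/2}$. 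You instead condition on the random DKW deviation $\Delta$ itself, use the uniformity of $G_{\widehat{\ent}_{\zeta}}(Z)$ given $(\mathcal{D}_n,\mathcal{D}_N)$ (valid since the test pair $(X,\zeta)$ is independent of $\mathcal{D}_N$ and $G_{\widehat{\ent}_{\zeta}}$ depends only on $\mathcal{D}_n$) to get $I_\varepsilon\leq 2\,\mathbb{E}[\Delta]$, and then integrate the DKW tail to obtain $\mathbb{E}[\Delta]\leq\sqrt{\pi/(2N)}$. Your route is more self-contained — it bypasses the external Lemma~\ref{lem:DH20} entirely — and it yields an explicit absolute constant $C=\sqrt{2\pi}$, whereas the paper's constant is left implicit through that lemma. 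Your case analysis showing that disagreement of the two indicators forces $|G_{\widehat{\ent}_{\zeta}}(Z)-(1-\varepsilon)|\leq\Delta$ is sound, and your care with the order of conditioning (uniformity holds given $\mathcal{D}_n$; $\Delta$ is $(\mathcal{D}_n,\mathcal{D}_N)$-measurable) is exactly the bookkeeping the argument requires.
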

\begin{proof}
The quantity $I_\varepsilon$ can be written as 
\begin{eqnarray*}
I_\varepsilon &=& \mathbb{E}\left[\big|\one_{\{\hat{\Gamma}_{\varepsilon}(X)\neq \re\}}-\one_{\{\tilde{\Gamma}_{\varepsilon}(X)\neq \re\}}\big|\right]\\
&=& \mathbb{E}\left[\big|\one_{\{\hat{G}_{\widehat{\ent}_{\zeta}}(\widehat{\ent}_{\zeta}(F^{*}_{X}))\geq 1-\varepsilon\}}-\one_{\{G_{\widehat{\ent}_{\zeta}}(\widehat{\ent}_{\zeta}(F^{*}_{X}))\geq 1-\varepsilon\}}\big|\right]\\
&=& \mathbb{E}\left[\one_{\{|\hat{G}_{\widehat{\ent}_{\zeta}}(\widehat{\ent}_{\zeta}(F^{*}_{X}))-G_{\widehat{\ent}_{\zeta}}(\widehat{\ent}_{\zeta}(F^{*}_{X}))|\geq |G_{\widehat{\ent}_{\zeta}}(\widehat{\ent}_{\zeta}(F^{*}_{X}))- 1+\varepsilon|\}}\right]\\
&=& \mathbb{P}\left(|\hat{G}_{\widehat{\ent}_{\zeta}}(\widehat{\ent}_{\zeta}(F^{*}_{X}))-G_{\widehat{\ent}_{\zeta}}(\widehat{\ent}_{\zeta}(F^{*}_{X}))|\geq |G_{\widehat{\ent}_{\zeta}}(\widehat{\ent}_{\zeta}(F^{*}_{X}))- 1+\varepsilon|\right).
\end{eqnarray*}
Let $\beta >0$. Then, we can deduce that 
\begin{equation}
I_\varepsilon \leq \mathbb{P}\left(|G_{\widehat{\ent}_{\zeta}}(\widehat{\ent}_{\zeta}(F^{*}_{X}))- 1+\varepsilon|\leq \beta\right)+\mathbb{P}\left(|\hat{G}_{\widehat{\ent}_{\zeta}}(\widehat{\ent}_{\zeta}(F^{*}_{X}))-G_{\widehat{\ent}_{\zeta}}(\widehat{\ent}_{\zeta}(F^{*}_{X}))|\geq \beta\right).
\label{eq:Iepsilon}
\end{equation}
By construction of $\widehat{\ent}_\zeta$, conditionally on $\mathcal{D}_n$, the random variable $G_{\widehat{\ent}_{\zeta}}(\widehat{\ent}_{\zeta}(F^{*}_{X}))~\sim \mathcal{U}([0,1])$. Then we can deduce that
\begin{equation}
\mathbb{P}\left(|G_{\widehat{\ent}_{\zeta}}(\widehat{\ent}_{\zeta}(F^{*}_{X}))- 1+\varepsilon|\leq \beta\right)=2\beta.
\label{eq:term1Iepsion}
\end{equation}
For the second term on the right side of Equation~\eqref{eq:Iepsilon}, we apply Lemma~\ref{lem:DKW} to $Z_i=\widehat{\ent}_{\zeta_i}(F^{*}_{X_i})$. These random variables are i.i.d. and real-valued, conditionally on $\mathcal{D}_n$. Thus, for all $\beta>0$, we have
\[
\mathbb{P}_{\mathcal{D}_N}\left(\sup_{z\in \mathbb{R}}\big|\hat{G}_{\widehat{\ent}_{\zeta}}(z)-G_{\widehat{\ent}_{\zeta}}(z)\big|\geq \beta\big|\mathcal{D}_n\right)\leq 2 \exp(-2N \beta^2).
\]
Therefore, we get 
\begin{eqnarray}
\mathbb{P}\left(|\hat{G}_{\widehat{\ent}_{\zeta}}(\widehat{\ent}_{\zeta}(X))-G_{\widehat{\ent}_{\zeta}}(\widehat{\ent}_{\zeta}(F^{*}_{X}))|\geq \beta\right)&\leq &\mathbb{P}\left(\sup_{z\in \mathbb{R}}\big|\hat{G}_{\widehat{\ent}_{\zeta}}(z)-G_{\widehat{\ent}_{\zeta}}(z)\big|\geq \beta\right)\nonumber\\
&=&\mathbb{E}\left[\mathbb{P}_{\mathcal{D}_N}\left(\sup_{z\in \mathbb{R}}\big|\hat{G}_{\widehat{\ent}_{\zeta}}(z)-G_{\widehat{\ent}_{\zeta}}(z)\big|\geq \beta\big|\mathcal{D}_n\right)\right]\nonumber\\
&\leq & 2 \exp(-2N \beta^2).
\label{eq:term2Iepsion}
\end{eqnarray}
Since the two conditions, Equation~\eqref{eq:term1Iepsion} and Equation~\eqref{eq:term2Iepsion}, of the Lemma~\ref{lem:DH20} are satisfied, applying the lemma yields the desired result.
\end{proof}
\begin{proof}[Proof of Proposition~\ref{prop:distributionfree}] Since $r(\tilde{\Gamma}_{\varepsilon})=\varepsilon$, we have that
\[
\mathbb{E}\left[|r(\hat{\Gamma}_{\varepsilon})-\varepsilon|\right]=\mathbb{E}\left[|r(\hat{\Gamma}_{\varepsilon})-r(\tilde{\Gamma}_{\varepsilon})|\right]=I_{\varepsilon}.
\]
Thus, the proposition~\ref{prop:Ivarepsilon} yields the result.
\end{proof}
\begin{proof}[Proof of Theorem~\ref{thm:consistency}]
 We consider the following decomposition
\begin{equation}
\mathbb{E}\left[\mathcal{E}_{\lambda_\varepsilon}(\hat{\Gamma}_{\varepsilon})\right]=\mathbb{E}\left[\mathcal{R}_{\lambda_\varepsilon}(\hat{\Gamma}_{\varepsilon})-\mathcal{R}_{\lambda_\varepsilon}(\tilde{\Gamma}_{\varepsilon})\right]+\mathbb{E}\left[\mathcal{E}_{\lambda_\varepsilon}(\tilde{\Gamma}_{\varepsilon})\right].
\label{eq:decompThm1}
\end{equation}
\item \textbf{Step 1.} We begin by establishing a bound on the term $\mathbb{E}\left[\mathcal{R}_{\lambda_\varepsilon}(\hat{\Gamma}_{\varepsilon})-\mathcal{R}_{\lambda_\varepsilon}(\tilde{\Gamma}_{\varepsilon})\right]$. By using Equation~\eqref{eq: riskGamma}, we can deduce that 
\[
\mathcal{R}_{\lambda_\varepsilon}(\hat{\Gamma}_{\varepsilon})-\mathcal{R}_{\lambda_\varepsilon}(\tilde{\Gamma}_{\varepsilon})= \mathbb{E}_{X,\zeta}\left[\left(\Div(\hat{F}_{n,X},F^{*}_{X})+(\ent(F_{X}^{*})-\lambda_{\varepsilon})\right)\left(\one_{\{\hat{\Gamma}_\varepsilon(X,\zeta)\neq \re\}}-\one_{\{\tilde{\Gamma}_\varepsilon(X,\zeta)\neq \re\}}\right)\right].
\]
Thanks to Assumption~\ref{ass:boundedentropy}, the quantity $|\ent(F_{X}^{*})-\lambda_{\varepsilon}|$ is bounded by $M<\infty$. Thus,
\begin{eqnarray*}
\mathbb{E}\left[\mathcal{R}_{\lambda_\varepsilon}(\hat{\Gamma}_{\varepsilon})-\mathcal{R}_{\lambda_\varepsilon}(\tilde{\Gamma}_{\varepsilon})\right]&\leq & \mathbb{E}\left[|\mathcal{R}_{\lambda_\varepsilon}(\hat{\Gamma}_{\varepsilon})-\mathcal{R}_{\lambda_\varepsilon}(\tilde{\Gamma}_{\varepsilon})|\right]\\
&\leq & \mathbb{E}\left[\Div(\hat{F}_{n,X},F^{*}_{X})\right]+\mathbb{E}\left[|\ent(F_{X}^{*})-\lambda_{\varepsilon}||\one_{\{\hat{\Gamma}_\varepsilon(X,\zeta)\neq \re\}}-\one_{\{\tilde{\Gamma}_\varepsilon(X,\zeta)\neq \re\}}|\right]\\
&\leq & \mathbb{E}\left[\Div(\hat{F}_{n,X},F^{*}_{X})\right]+MI_{\varepsilon}.
\end{eqnarray*}
We apply Proposition~\ref{prop:Ivarepsilon} to finish the first step.
\item \textbf{Step 2.} We control the term $\mathbb{E}\left[\mathcal{E}_{\lambda_\varepsilon}(\tilde{\Gamma}_{\varepsilon})\right]$. Proposition~\ref{prop:excessrisk} gives the following result
\[
\mathbb{E}\left[\mathcal{E}_{\lambda_\varepsilon}(\tilde{\Gamma}_{\varepsilon})\right]\leq  \mathbb{E}\left[\Div(\hat{F}_{n,X},F^{*}_{X})\right]+\mathbb{E}\left[\big|\ent(F_{X}^{*})-\lambda_{\varepsilon}\big|\one_{\{\tilde{\Gamma}_{\varepsilon}(X)\Delta \Gamma_{\varepsilon}^{*}(X)\}}\right].
\]
In the event $\{\tilde{\Gamma}_{\varepsilon}(X)\Delta \Gamma_{\varepsilon}^{*}(X)\}$, we have two cases:
\item[•] Case $1$: $\{\tilde{\Gamma}_{\varepsilon}(X)\setminus \Gamma_{\varepsilon}^{*}(X)\}$, we have that $\widehat{\ent}_{\zeta}(F^{*}_{X}) \leq \tilde{\lambda}_\varepsilon$ and $\ent(F^*_X) > \lambda_\varepsilon$. Thus, 
\begin{eqnarray*}
\big|\ent(F_{X}^{*})-\lambda_{\varepsilon}\big|=\ent(F_{X}^{*})-\lambda_{\varepsilon}=\ent(F_{X}^{*})-\widehat{\ent}_{\zeta}(F^{*}_{X})+\widehat{\ent}_{\zeta}(F^{*}_{X})-\tilde{\lambda}_\varepsilon+\tilde{\lambda}_\varepsilon-\lambda_{\varepsilon}.
\end{eqnarray*}
Since $\widehat{\ent}_{\zeta}(X)-\tilde{\lambda}_\varepsilon\leq 0$, we get 
\[
\mathbb{E}\left[\big|\ent(F_{X}^{*})-\lambda_{\varepsilon}\big|\one_{\{\tilde{\Gamma}_{\varepsilon}(X)\setminus \Gamma_{\varepsilon}^{*}(X)\}}\right]=\mathbb{E}\left[\left((\ent(F_{X}^{*})-\widehat{\ent}_{\zeta}(F^{*}_{X}))+(\tilde{\lambda}_\varepsilon-\lambda_{\varepsilon})\right)\one_{\{\tilde{\Gamma}_{\varepsilon}(X)\setminus \Gamma_{\varepsilon}^{*}(X)\}}\right].
\]
\item[•] Case $2$: $\{\Gamma_{\varepsilon}^{*}(X)\setminus \tilde{\Gamma}_{\varepsilon}(X) \}$, we have that $\widehat{\ent}_{\zeta}(F^{*}_{X}) > \tilde{\lambda}_\varepsilon$ and $\ent(F^*_X) \leq  \lambda_\varepsilon$. Thus, 
\begin{eqnarray*}
\big|\ent(F_{X}^{*})-\lambda_{\varepsilon}\big|=\lambda_{\varepsilon}-\ent(F_{X}^{*})=\lambda_{\varepsilon}-\tilde{\lambda}_\varepsilon+\tilde{\lambda}_\varepsilon-\widehat{\ent}_{\zeta}(F^{*}_{X})+\widehat{\ent}_{\zeta}(F^{*}_{X})-\ent(F_{X}^{*}).
\end{eqnarray*}
Using the fact that $\widehat{\ent}_{\zeta}(F^{*}_{X})-\tilde{\lambda}_\varepsilon\leq 0$, we get 
\[
\mathbb{E}\left[\big|\ent(F_{X}^{*})-\lambda_{\varepsilon}\big|\one_{\{\Gamma_{\varepsilon}^{*}(X)\setminus \tilde{\Gamma}_{\varepsilon}(X)\}}\right]=\mathbb{E}\left[\left((\widehat{\ent}_{\zeta}(F^{*}_{X})-\ent(F_{X}^{*}))+(\lambda_{\varepsilon}-\tilde{\lambda}_\varepsilon)\right)\one_{\{\Gamma_{\varepsilon}^{*}(X)\setminus \tilde{\Gamma}_{\varepsilon}(X)\}}\right].
\]
Recall that $r(\tilde{\Gamma}_{\varepsilon})= r(\Gamma_{\varepsilon}^{*}) =\varepsilon$. Using the observations above we can deduce that
\begin{eqnarray*}
\mathbb{E}\left[\big|\ent(F_{X}^{*})-\lambda_{\varepsilon}\big|\one_{\{\tilde{\Gamma}_{\varepsilon}(X)\Delta \Gamma_{\varepsilon}^{*}(X)\}}\right]&\leq & \mathbb{E}\left[\big|\widehat{\ent}_{\zeta}(F^{*}_{X})-\ent(F_{X}^{*})\big|\right]+\big|\lambda_{\varepsilon}-\tilde{\lambda}_\varepsilon\big|\mathbb{E}\left[\one_{\{\tilde{\Gamma}_{\varepsilon}(X)\Delta \Gamma_{\varepsilon}^{*}(X)\}}\right]\\
&\leq & \mathbb{E}\left[\big|\widehat{\ent}_{\zeta}(F^{*}_{X})-\ent(F_{X}^{*})\big|\right]+\big|\lambda_{\varepsilon}-\tilde{\lambda}_\varepsilon\big|\left(r(\tilde{\Gamma}_{\varepsilon})- r(\Gamma_{\varepsilon}^{*})\right)\\
&=& \mathbb{E}\left[\big|\widehat{\ent}_{\zeta}(F^{*}_{X})-\ent(F_{X}^{*})\big|\right].
\end{eqnarray*}
Using the definition of $\widehat{\ent}_{\zeta}$ and $|\zeta|\leq u $ for $u>0$, we get 
\begin{eqnarray*}
\mathbb{E}\left[\big|\widehat{\ent}_{\zeta}(F_{X}^{*})-\ent(F_{X}^{*})\big|\right]\leq \mathbb{E}\left[\big|\ent(\hat{F}_{n,X})-\ent(F_{X}^{*})\big|\right]+u.
\end{eqnarray*}
Therefore,
\[
\mathbb{E}\left[\mathcal{E}_{\lambda_\varepsilon}(\tilde{\Gamma}_{\varepsilon})\right]\leq  \mathbb{E}\left[\Div(\hat{F}_{n,X},F^{*}_{X})\right]+\mathbb{E}\left[\big|\ent(\hat{F}_{n,X})-\ent(F_{X}^{*})\big|\right]+u.
\]
Merging the results of the \textbf{Step 1.} and \textbf{Step 2.} in Eq.~\eqref{eq:decompThm1} and we get the result.
\end{proof}
\section{Drawback of choosing $\lambda$}
\label{sec:drawbackogchoosinglambda}
We illustrate the properties of $\Gamma^{*}_{\lambda}$, as given in the Proposition~\ref{prop:propertiesoptimalpredictor}, using the concrete dataset and the DRF predictor (see Section~\ref{sec:numerical}), as shown in Figure~\ref{fig:ErrorRejet}. The curves display $\widehat{\err}(\hat{\Gamma}_{\lambda})$ (blue-solid line) and $\hat{r}(\hat{\Gamma}_{\lambda})$ (red-dashed line) as functions of $\lambda$.
\begin{figure}[!ht]
\begin{center}
\begin{tabular}{c}
\includegraphics[height = 0.35 \columnwidth]{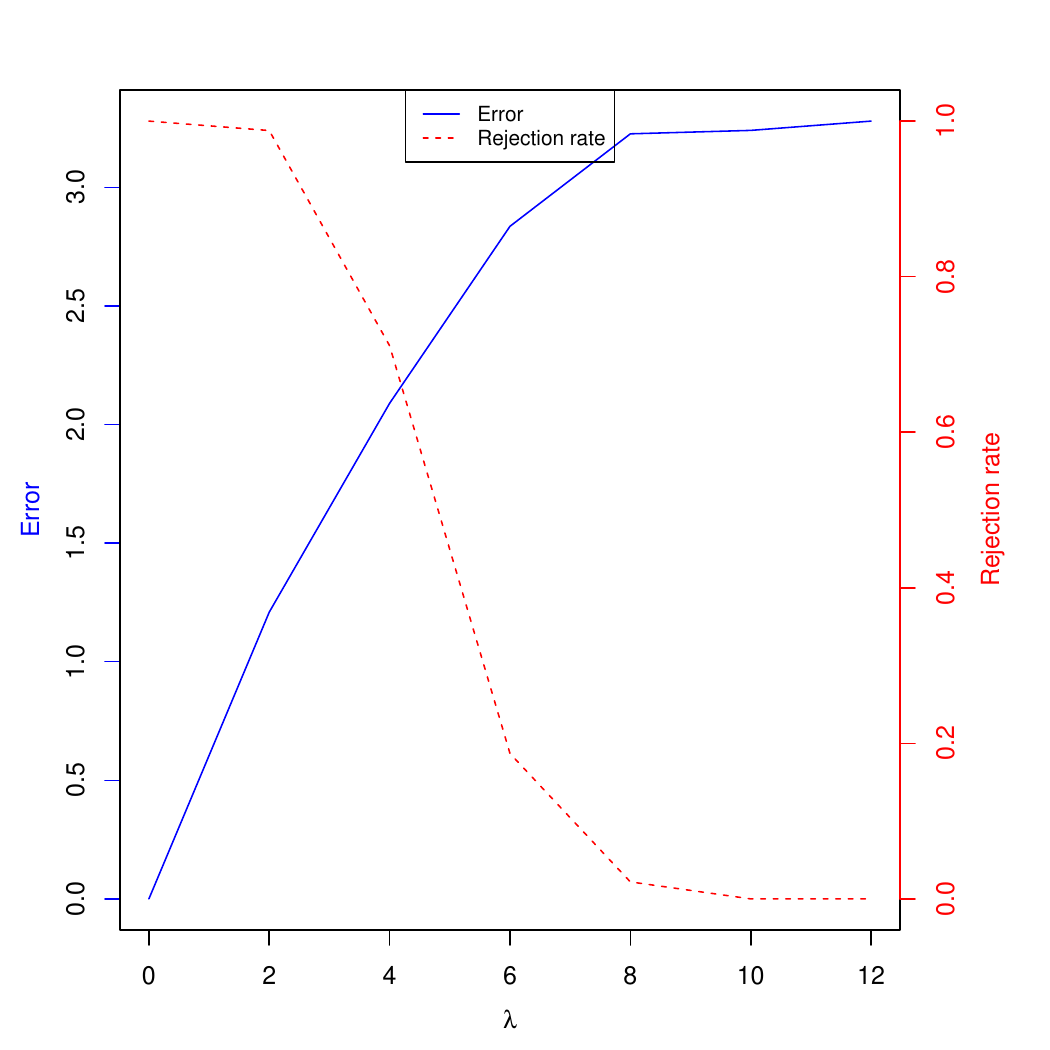} 
\end{tabular}
\end{center}
  \caption{ \label{fig:ErrorRejet} $\widehat{\err}\left(\hat{\Gamma}_{\lambda}\right)$ and $ \hat{r}\left(\hat{\Gamma}_{\lambda}\right)$ vs. $\lambda$.
   }
\end{figure}

\end{document}